\let\mathcal\mathscr
\title{\sc Scheme-theoretic Whitney conditions}
\author{\sc Roland Abuaf \footnote{E-mail :\textit{rabuaf@gmail.com. \textcolor{white}{Rectorat de Paris, département de mathématiques, 47 rue des \'Ecoles, 75005, Paris, France}}}}
\let\mathcal\mathscr
\newtheorem{theo}{Theorem}[subsection]
\newtheorem{exem}[theo]{Example}
\newtheorem{rem}[theo]{Remark}
\newtheorem{prop}[theo]{Proposition}
\newtheorem{prob}[theo]{Problem}
\newtheorem{quest}[theo]{Question}
\newtheorem{defi}[theo]{Definition}
\newtheorem{cor}[theo]{Corollary}
\newtheorem{conj}[theo]{Conjecture}
\def\OO{\mathcal{O}}
\def\R0{\mathrm{R^{0}}}
\def\OO{\mathcal{O}}
\def\hX{\widehat{X}}
\newcommand{\eq}[1][r]
{\ar@<-3pt>@{-}[#1]
\ar@<-1pt>@{}[#1]|<{}="gauche"
\ar@<+0pt>@{}[#1]|-{}="milieu"
\ar@<+1pt>@{}[#1]|>{}="droite"
\ar@/^2pt/@{-}"gauche";"milieu"
\ar@/_2pt/@{-}"milieu";"droite"}
\newcommand{\incl}[1][r]
  {\ar@<-0.2pc>@{^(-}[#1] \ar@<+0.2pc>@{-}[#1]}
\newenvironment{proof}
{
\noindent
\textit{\underline{Proof}} :\\
$\blacktriangleright\;$%
}
{\hspace{\stretch{1}}%
$\blacktriangleleft$}
\begin{document}

\maketitle

\begin{abstract}
We investigate a scheme-theoretic variant of Whitney condition $(a)$. If $X$ is  a projective variety over the field of complex numbers and $Y \subset X$ a subvariety, then $X$ satisfies generically the scheme-theoretic Whitney condition $(a)$ along $Y$ provided that the projective dual of $X$ is smooth. We give applications to tangency of projective varieties over $\mathbb{C}$ and to convex real algebraic geometry. In particular, we prove a Bertini-type theorem for osculating plane of smooth complex space curves and a generalization of a Theorem of Ranestad and Sturmfels describing the algebraic boundary of an affine compact real variety. 
\end{abstract}

\vspace{\stretch{1}}

\newpage

\tableofcontents

\begin{section}{Introduction}

Let $X \subset \mathbb{R}^n$ be a non-degenerate compact convex body, whose interior contains $0$. Denote  by $X^* \subset (\mathbb{R}^n)^*$ the dual body. Let $x \in \partial X$, the set $x^{\perp} \cap X^*$ is called the \textbf{exposed face} of $X^*$ with respect to $x^{\perp}$. A point $x \in \partial X$ is said to be an $r$-\textbf{singular point} if the exposed face of $X^*$ with respect to $x^{\perp}$ is of dimension at least $r$. In\cite{AK}, Anderson and Klee give a sharp bound on the Hausdorff dimension of the set of $r$-singular points of a convex body. Namely, we have the:

\begin{theo}[\cite{AK}] \label{AK-theo} Let $X \subset \mathbb{R}^n$ be a non-degenerate compact convex body, whose interior contains $0$. The set of $r$-singular points of $X$ has Hausdorff dimension at most $n-r-1$.

\end{theo}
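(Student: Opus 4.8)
The plan is to recast the statement as a metric fact about the \emph{normal bundle} of $X$ and then to run a co‑area argument. The first step is a dictionary. For $x\in\partial X$ let $N(x)=\{u\in\mathbb{R}^{n}:\langle u,y-x\rangle\le 0\ \text{for all }y\in X\}$ be the normal cone and $\bar N(x)=N(x)\cap S^{n-1}$ the spherical normal cone. Since $0\in\mathrm{int}\,X$ the support function $h_{X}$ is strictly positive off the origin, and a direct computation identifies the exposed face $x^{\perp}\cap X^{*}$ with the base $\{u\in N(x):h_{X}(u)=1\}$ of the cone $N(x)$; hence $\dim(x^{\perp}\cap X^{*})=\dim N(x)-1=\dim\bar N(x)$, so that ``$x$ is $r$‑singular'' becomes ``$\dim\bar N(x)\ge r$''. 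I would therefore reduce to proving $\dim_{\mathcal H}S_{r}\le n-r-1$, where $S_{r}=\{x\in\partial X:\dim\bar N(x)\ge r\}$ and $\dim_{\mathcal H}$ denotes Hausdorff dimension.

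Next I would introduce the unit normal bundle $\Sigma=\{(x,u)\in\partial X\times S^{n-1}:u\in\bar N(x)\}$, a compact subset of $\mathbb{R}^{n}\times\mathbb{R}^{n}$, together with the projection $\pi_{1}\colon\Sigma\to\partial X$. The geometric heart of the proof is that $\Sigma$ is bi‑Lipschitz to $S^{n-1}$: fixing $\varepsilon>0$, the nearest‑point map $y\mapsto\big(p(y),\varepsilon^{-1}(y-p(y))\big)$ identifies $\partial(X+\varepsilon B^{n})$ with $\Sigma$, it is Lipschitz because the metric projection $p\colon\mathbb{R}^{n}\to X$ is $1$‑Lipschitz, and its inverse $(x,u)\mapsto x+\varepsilon u$ is affine; as $\partial(X+\varepsilon B^{n})$ is the boundary of a convex body it is itself bi‑Lipschitz to $S^{n-1}$. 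In particular $\dim_{\mathcal H}\Sigma=n-1$ and $\mathcal H^{n-1}(\Sigma)<\infty$, while the fibre $\pi_{1}^{-1}(x)$ is isometric to the spherically convex set $\bar N(x)\subset S^{n-1}$ and therefore carries strictly positive $\mathcal H^{\dim\bar N(x)}$‑measure.

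Finally I would run the co‑area argument. Put $\widetilde S_{r}=\pi_{1}^{-1}(S_{r})\subset\Sigma$ and suppose, for contradiction, that $\dim_{\mathcal H}S_{r}>n-r-1$; choose $d$ with $n-r-1<d<\dim_{\mathcal H}S_{r}$, so that $\mathcal H^{d}(S_{r})=+\infty$ whereas $\mathcal H^{r+d}(\widetilde S_{r})=0$ because $r+d>n-1\ge\dim_{\mathcal H}\Sigma$. Eilenberg's co‑area inequality applied to the $1$‑Lipschitz map $\pi_{1}\colon\widetilde S_{r}\to S_{r}$ with exponents $r$ and $d$ gives $\int_{S_{r}}^{*}\mathcal H^{r}\big(\pi_{1}^{-1}(x)\big)\,d\mathcal H^{d}(x)\le C(r,d)\,\mathcal H^{r+d}(\widetilde S_{r})=0$; but the integrand equals $\mathcal H^{r}(\bar N(x))>0$ for every $x\in S_{r}$, and $\mathcal H^{d}(S_{r})=+\infty$, so the left‑hand side is positive — a contradiction. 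Hence $\dim_{\mathcal H}S_{r}\le n-r-1$.

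The main difficulty I anticipate is not conceptual but is the measure‑theoretic bookkeeping required to license the co‑area step: one must check that $S_{r}$ (equivalently $\widetilde S_{r}$) is an analytic, hence $\mathcal H^{s}$‑measurable, set — which follows by writing $S_{r}$ as the image under the first projection of the Borel set of tuples $(x,u_{0},\dots,u_{r})$ with $u_{i}\in\bar N(x)$ and $u_{0},\dots,u_{r}$ linearly independent — and one must invoke convexity of the normal cone to know that the fibres have \emph{positive}, not merely finite, $\mathcal H^{r}$‑measure. It is worth recording that the bound is sharp: the product of a round ``ice‑cream cone'' in $\mathbb{R}^{r+1}$ with the cube $[-1,1]^{n-r-1}$ has an $(n-r-1)$‑dimensional set of $r$‑singular points, namely the set of apices.
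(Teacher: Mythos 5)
The paper does not actually prove this statement: Theorem \ref{AK-theo} is quoted from Anderson and Klee purely as motivation, so there is no internal proof to compare yours against. Judged on its own, your argument is correct. The dictionary between the exposed face $x^{\perp}\cap X^{*}$ and the base $\{u\in N(x):h_{X}(u)=1\}$ of the normal cone is right (for $u\in N(x)$ one has $h_{X}(u)=\langle u,x\rangle$, and $0\in\mathrm{int}\,X$ makes $h_{X}$ positive off the origin, so the radial projection onto $\bar N(x)$ is bi-Lipschitz on compacta and the dimensions match). The identification of $\Sigma$ with $\partial(X+\varepsilon B^{n})$ via the metric projection is the standard fact that the unit normal bundle of a convex body is a Lipschitz $(n-1)$-sphere, which gives $\mathcal{H}^{s}(\Sigma)=0$ for $s>n-1$; convexity gives $\mathcal{H}^{r}(\bar N(x))>0$ whenever $\dim N(x)\ge r+1$, since the relative interior of the cone meets the sphere of its linear span in a nonempty relatively open subset of an $r$-sphere; and Eilenberg's inequality holds for the upper integral over arbitrary sets, so even the measurability of $S_{r}$ that you worry about is not strictly needed (an upper integral of a function strictly positive on a set of infinite outer measure is positive, by countable subadditivity applied to the sets where the integrand exceeds $1/k$). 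Your route does differ from Anderson and Klee's original one, which is a direct covering argument exhibiting $S_{r}$ as a countable union of Lipschitz images of subsets of $\mathbb{R}^{n-r-1}$; that argument yields the stronger conclusion that $S_{r}$ has $\sigma$-finite $\mathcal{H}^{n-r-1}$-measure, whereas the coarea argument only bounds the Hausdorff dimension — exactly what the statement asks. In exchange your proof is shorter, and it is the convex-geometric shadow of the conormal-variety mechanism that the rest of the paper exploits: the incidence set $\Sigma$ plays the role of $I_{X}$, and the tension between the $(n-1)$-dimensionality of $\Sigma$ and the $r$-dimensionality of the fibres over $S_{r}$ is the real-convex analogue of the Lagrangian estimate behind Theorem \ref{mainmain}.
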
 

Following work of Ranestad and Sturmfels \cite{RS}, there have been some interest to know whether a version of Theorem \ref{AK-theo} could possibly hold in complex algebraic geometry. Namely, I conjectured in \cite{abuaf1} the following:

\begin{conj}[\cite{abuaf1}] \label{conj-moi}
Let $X \subset \mathbb{P}^N$ be a smooth complex projective variety. Denote by $X^*_r$ the variety:

\begin{equation*}
X^*_r := \{ H^{\perp} \in X^*, \, \textrm{such that} \, \dim \langle (H \cap X)_{tan} \rangle \geq r \}.
\end{equation*}
Then, we have $\dim X^*_r \leq N-r-1$.
\end{conj}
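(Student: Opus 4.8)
The plan is to deduce the statement from the scheme-theoretic Whitney property of the dual variety $X^{*}$, whose own projective dual $(X^{*})^{*}=X$ is smooth by biduality.

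\textbf{Reduction to the conormal variety.} Let $\mathcal{I}_{X}\subset\mathbb{P}^{N}\times(\mathbb{P}^{N})^{*}$ be the conormal variety, with projections $\pi:\mathcal{I}_{X}\to X$ and $\pi^{*}:\mathcal{I}_{X}\to(\mathbb{P}^{N})^{*}$ of image $X^{*}$. Since $X$ is smooth, $\mathcal{I}_{X}=\{(x,[H]):\mathbb{T}_{x}X\subseteq H\}$ is already closed, $\pi$ is a $\mathbb{P}^{\,N-1-\dim X}$-bundle, $\dim\mathcal{I}_{X}=N-1$, and the fibre $(\pi^{*})^{-1}([H])$ has support $(H\cap X)_{tan}$; upper semicontinuity of the span of fibres makes $X^{*}_{r}$ closed. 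Biduality gives $\mathcal{I}_{X}=\mathcal{I}_{X^{*}}$ up to exchanging the factors. Writing $\delta=N-1-\dim X^{*}$ for the dual defect, every fibre of $\pi^{*}$ has dimension $\ge\delta$, and over $X^{*,\mathrm{sm}}$ it is the linear space $(\mathbb{T}_{[H]}X^{*})^{\perp}\cong\mathbb{P}^{\delta}$. Hence $X^{*}_{r}=X^{*}$ with $\dim X^{*}_{r}=N-1-\delta\le N-r-1$ when $r\le\delta$, and $X^{*}_{r}\subseteq\sing(X^{*})$ when $r>\delta$: the interesting case is when the contact locus spans more than a $\mathbb{P}^{\delta}$, which forces $[H]$ to be a singular point of $X^{*}$.

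\textbf{Dualizing condition $(a)$.} Let $W$ be an irreducible component of $X^{*}_{r}$ and let $s\ge r$ be the generic value of $\dim\langle(H\cap X)_{tan}\rangle$ on $W$. Applying our main theorem on scheme-theoretic Whitney conditions to the pair $(X^{*},W)$ — legitimate because the projective dual of $X^{*}$ is the smooth variety $X$ — we obtain that $X^{*}$ satisfies the scheme-theoretic, a fortiori set-theoretic, Whitney condition $(a)$ along $W$ at a general point $[H]\in W$. The fibre of $\mathcal{I}_{X^{*}}$ over $[H]$ equals $\bigcup_{\tau}\tau^{\perp}$, the union over all limits $\tau$ of embedded tangent spaces to $X^{*,\mathrm{sm}}$ at $[H]$; condition $(a)$ forces $\mathbb{T}_{[H]}W\subseteq\tau$, hence $\tau^{\perp}\subseteq(\mathbb{T}_{[H]}W)^{\perp}$, for every such $\tau$. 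Since $\mathcal{I}_{X^{*}}=\mathcal{I}_{X}$ and $(\pi^{*})^{-1}([H])$ has support $(H\cap X)_{tan}$, we conclude that for general $[H]\in W$
\begin{equation*}
(H\cap X)_{tan}\subseteq(\mathbb{T}_{[H]}W)^{\perp},
\end{equation*}
a linear subspace of $\mathbb{P}^{N}$ of dimension $N-1-\dim W$ ($[H]$ being a smooth point of $W$). Therefore $s=\dim\langle(H\cap X)_{tan}\rangle\le N-1-\dim W$, i.e. $\dim W\le N-1-s\le N-r-1$; taking the union over the finitely many components $W$ of $X^{*}_{r}$ yields $\dim X^{*}_{r}\le N-r-1$.

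\textbf{Main obstacle.} All the substance is packaged into the cited main theorem, which extracts the scheme-theoretic Whitney condition $(a)$ for $X^{*}$ from the smoothness of $X=(X^{*})^{*}$; granting it, the one genuinely new move above is the biduality translation of $(a)$ into the inclusion $(H\cap X)_{tan}\subseteq(\mathbb{T}_{[H]}W)^{\perp}$. The points left to verify are routine: irreducibility of $X$ (so that $X^{*,\mathrm{sm}}$ is dense in $X^{*}$; otherwise one argues component by component, watching that spans of unions of linear spaces may jump), that only the support of the fibre $(\pi^{*})^{-1}([H])$, i.e. the set $(H\cap X)_{tan}$, enters the span, and that validity of condition $(a)$ and the equality $\dim\langle(H\cap X)_{tan}\rangle=s$ are both met on a common dense open subset of $W$.
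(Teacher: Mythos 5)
The statement you are trying to prove is false, and the paper says so explicitly: Example \ref{exemain} (due to Voisin) shows that for $X=v_d(\mathbb{P}^n)\subset\mathbb{P}^N$ with $d\geq 4$ and $n\geq 3$, the locus $X^*_{cone}$ of hyperplanes cutting cones violates the bound $\dim X^*_r\leq N-r-1$. So your argument must break somewhere, and it does, at a precise point: the passage from ``the fibre $(\pi^{*})^{-1}([H])$ has support $(H\cap X)_{tan}$'' to the displayed inclusion $(H\cap X)_{tan}\subseteq(\mathbb{T}_{[H]}W)^{\perp}$. What the Whitney condition $(a)$ (set-theoretic or scheme-theoretic) controls is the \emph{reduced} fibre of the conormal variety, i.e.\ the set of points $x$ with $\mathbb{T}_xX\subseteq H$; containment of that \emph{set} in the linear space $(\mathbb{T}_{[H]}W)^{\perp}$ bounds $\dim\langle (H\cap X)^{red}_{tan}\rangle$, not $\dim\langle (H\cap X)_{tan}\rangle$. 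The conjecture is about the linear span of the tangency \emph{scheme}, whose non-reduced structure can force the span to be enormous even when the support is a single point: in Voisin's example $(H\cap X)_{tan}$ is a fat point at the vertex of the cone whose span has dimension at least $N-n(n+1)$, while its reduced support spans a point. Your ``routine point to verify'' that ``only the support of the fibre enters the span'' is exactly the false step.

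What your argument does correctly establish is the set-theoretic version, Theorem \ref{app1} of the paper (replace $\langle(H\cap X)_{tan}\rangle$ by $\langle(H\cap X)^{red}_{tan}\rangle$), for which ordinary Whitney $(a)$ plus reflexivity suffices, and this is essentially the paper's proof of Theorem \ref{app11}. If instead you invoke the full scheme-theoretic Whitney condition $(a)$ of Theorem \ref{mainmain}, what you get is control of the span of the union of the \emph{tangent spaces} to the fibre scheme, i.e.\ $\langle T_{(H\cap X)_{tan}}\rangle\subseteq(\mathbb{T}_{[H]}W)^{\perp}$, which yields Theorem \ref{main}. Neither quantity dominates $\dim\langle(H\cap X)_{tan}\rangle$, and no argument along these lines can, since the statement it would prove is false.
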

In the above conjecture, $(H \cap X)_{tan}$ is the tangency scheme of $H$ along $X$ and $\langle (H \cap X)_{tan} \rangle$ is the linear span of this tangency scheme. This conjecture is wrong as shows the following example which was communicated to me by Voisin:

\begin{exem} \label{exemain}
Let $X = v_d(\mathbb{P}^n) \subset \mathbb{P}(S^d \mathbb{C}^{n+1}) = \mathbb{P}^{N}$, with $N = {n+d \choose d} -1$. Hyperplane sections of $X$ are isomorphic to hypersurfaces of degree $d$ in $\mathbb{P}^n$. Hence, the variety $X^* \subset (\mathbb{P}^{N})^*$ parametrizes singular hypersurfaces of degree $d$ in $\mathbb{P}^n$ (the variety $X^*$ is known as the discriminant). We focus on singular hypersurfaces which are cones over hypersurfaces of degree $d$ in $\mathbb{P}^{n-1}$. A trivial count of dimension shows that there is a  ${d+n-1 \choose d} + n-1$-dimensional family of such cones (choose a hypersurface of degree $d$ in $\mathbb{P}^{n-1}$ and then choose the vertex of the cone in $\mathbb{P}^n$.)

\bigskip

We denote by $X^*_{cone}$ the closure of the subset of $X^*$ which consists of points $H^{\perp} \in X^*$ such that $H \cap X$ is a cone over a hypersurface of degree $d$ in $\mathbb{P}^{n-1}$. Let $H^{\perp}$ be generic in $X^*_{cone}$. We will give a lower bound on the linear span of $(H \cap X)_{tan}$. We fix coordinates (say $x_0, \cdots, x_n$) on $\mathbb{P}^n$, such that the vertex of $H \cap X$ is the point $[1:0: \cdots :0]$. We denote by $F_H$ the degree $d$ homogeneous polynomial in the $x_i$ corresponding to $H \cap X$. Since $H \cap X$ is a cone with vertex $[1:0: \cdots :0]$, $F_H$ only depends from $x_1, \cdots, x_n$. The tangency locus $(H \cap X)_{tan}$ is defined in $X$ by the vanishing of the partial derivatives:

\begin{equation*}
\left\{ \dfrac{\partial F_H}{\partial x_1}, \cdots, \dfrac{\partial F_H}{\partial x_n} \right\}.
\end{equation*} 
(we do a slight abuse of notations here as we consider the scheme defined by the ideal $\left( \dfrac{\partial F_H}{\partial x_1}, \cdots, \dfrac{\partial F_H}{\partial x_n} \right)$ as a subscheme of $X$ and not a subscheme of $\mathbb{P}^n$).

Furthermore, as $x_0, \cdots, x_n$ is a system of coordinates for $\mathbb{P}^n$, we know that the ideals $\left(\dfrac{\partial F_H}{\partial x_i}\right)$ and $\left(x_0\dfrac{\partial F_H}{\partial x_i}, \cdots, x_n\dfrac{\partial F_H}{\partial x_i} \right)$ are equal for all $i$. Hence, the subscheme of $X$ defined by the ideal $\left(\dfrac{\partial F_H}{\partial x_1}, \cdots, \dfrac{\partial F_H}{x_n}\right)$ is equal to the subscheme of $X$ defined by the ideal $\left(x_i\dfrac{\partial F_H}{\partial x_j}\right)_{i \in \{0 \cdots n \}, j \in \{1, \cdots n \} }$. We deduce that the linear span of $(H \cap X)_{tan}$ in $\mathbb{P}^N$ is defined by the equations:

\begin{equation*}
x_j \dfrac{\partial F_H}{\partial x_i} =0,
\end{equation*}
for all $i \in \{1, \cdots, n \}$ and all $j \in \{0, \cdots n \}$. As a consequence, we find: \[\dim \langle (H \cap X)_{tan} \rangle \geq N - n(n+1).\] We have seen that $\dim X^*_{cone} = {d+n-1 \choose d} + n-1$. But ${d+n-1 \choose d} + n-1 > N- (N - n(n+1))-1 = n(n+1)-1$ as soon as $d \geq 4$ if $n \geq 3$. So that $X^*_{cone}$ is a counter-example to Conjecture \ref{conj-moi} as soon as $d \geq 4$ and $n \geq 3$. 

\end{exem}

For the applications found in \cite{RS}, only the set-theoretic version Conjecture \ref{conj-moi} is needed. It is stated in \cite{abuaf1} that this set-theoretic version is true and we have:

\begin{theo} \label{app1}
Let $X \subset \mathbb{P}^N$ be a projective variety. Denote by $\tilde{X}^*_r$ the variety:

\begin{equation*}
\tilde{X}^*_r := \{ H^{\perp} \in X^*, \, \textrm{such that} \, \dim \langle (H \cap X)^{red}_{tan} \rangle \geq r \}.
\end{equation*}
Then, we have $\dim \tilde{X}^*_r \leq N-r-1$.
\end{theo}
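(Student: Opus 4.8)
\textit{Sketch of a proof.} We may assume $X$ irreducible (otherwise argue on its components and their pairwise intersections, by induction on the number of components), of dimension $d$, and we proceed by induction on $N$; the cases $d=0$, where $X$ is a finite set and $\tilde{X}^*_r$ is a finite union of linear systems of hyperplanes through $r+1$ independent points of $X$, and $d=N$, where $X^*=\emptyset$, are immediate, so $1\leq d\leq N-1$. Write $\mathrm{Con}(X)\subset\mathbb{P}^N\times(\mathbb{P}^N)^*$ for the conormal variety, with its projections $p,q$; recall that $\dim\mathrm{Con}(X)=N-1$, that for $H^\perp\in X^*$ the support of $(H\cap X)_{tan}$ equals $C_H:=p(q^{-1}(H^\perp))$, so that $\langle (H\cap X)^{red}_{tan}\rangle=\langle C_H\rangle$, and that, $\delta:=N-1-\dim X^*$ being the dual defect, one has $\dim C_H\geq\delta$ for every $H^\perp\in X^*$, with equality for general $H^\perp$. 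If $r\leq\delta$, then for general $H^\perp\in X^*$ the variety $C_H$ has dimension $\delta$, hence spans a linear space of dimension $\geq\delta\geq r$; thus $\tilde{X}^*_r$ contains a dense open subset of $X^*$ and $\dim\tilde{X}^*_r=\dim X^*=N-1-\delta\leq N-1-r$, and we are done. So from now on $r>\delta$.

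Fix an irreducible component $V$ of $\tilde{X}^*_r$, of dimension $v$, and let $c=\dim C_H$ and $\rho=\dim\langle C_H\rangle\ (\geq r)$ for $H^\perp$ general in $V$. Since $q^{-1}(V)\to V$ is dominant with general fibre $C_H$, we get $v+c=\dim q^{-1}(V)\leq\dim\mathrm{Con}(X)=N-1$; in particular, if $c\geq r$ then $v\leq N-1-r$ and we are done. Suppose now $c<r$. For $H^\perp$ general in $V$ a general point $x_0\in C_H$ is a smooth point of $X$ (the contribution of those $H^\perp$ with $C_H\subseteq\sing(X)$ is estimated separately, by Noetherian induction on the singular strata of $X$). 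Let $\pi\colon\mathbb{P}^N\dashrightarrow\mathbb{P}^{N-d-1}$ be the linear projection from the embedded tangent space $T_{x_0}X$, put $X'=\overline{\pi(X)}$, and let $H'$ be the image under $\pi$ of the hyperplane $H\supseteq T_{x_0}X$. For general $x\in C_H$ the map $\pi$ is defined and smooth on $X$ at $x$, the point $\pi(x)$ is smooth on $X'$, and $T_{\pi(x)}X'=\pi(T_xX)\subseteq\pi(H)=H'$; hence $H'$ is tangent to $X'$, $\overline{\pi(C_H)}\subseteq C_{H'}$, and $\langle C_{H'}\rangle\supseteq\pi(\langle C_H\rangle)$, a linear space of dimension $\rho-1-\dim\bigl(\langle C_H\rangle\cap T_{x_0}X\bigr)$. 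Granting the general-position statement that $\langle C_H\rangle\cap T_{x_0}X=T_{x_0}C_H$ for general $x_0\in C_H$ — and so has dimension exactly $c$ — we conclude that $(H')^\perp$ lies in the analogue $\tilde{X'}^*_{\rho-1-c}$ of $\tilde{X}^*_{\rho-1-c}$ for the variety $X'\subset\mathbb{P}^{N-d-1}$.

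Now the assignment $(H^\perp,x_0)\mapsto(x_0,(H')^\perp)$ is defined on the $(v+c)$-dimensional variety $\{(H^\perp,x_0):H^\perp\in V,\ x_0\in C_H\ \text{general}\}\subseteq q^{-1}(V)$, and it is injective, since $x_0$ recovers the centre $T_{x_0}X$ of $\pi$ and $H$ is then recovered from $H'$. Projecting the image to $X$, the fibre over $x_0$ lies in $\tilde{X'}^*_{\rho-1-c}$ with $X'=\overline{\pi(X)}$; since $N-d-1<N$, the inductive hypothesis gives $\dim\tilde{X'}^*_{\rho-1-c}\leq(N-d-1)-(\rho-1-c)-1=N-d-1-\rho+c$, uniformly in $x_0$. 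Therefore
\[
v+c\ \leq\ \dim X+\bigl(N-d-1-\rho+c\bigr)\ =\ N-1-\rho+c ,
\]
whence $v\leq N-1-\rho\leq N-1-r$, which is the asserted bound for the component $V$, and hence for $\tilde{X}^*_r$.

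The point I expect to be the main obstacle is precisely the general-position statement $\langle C_H\rangle\cap T_{x_0}X=T_{x_0}C_H$ for general $x_0\in C_H$: if this intersection is strictly larger the final inequality no longer closes, so one must either establish this transversality or absorb the exceptional tangency configurations into an argument using the scheme-theoretic tangency results developed in the next sections — indeed it is exactly the presence of such extra second-order tangency along the tangency scheme that underlies Example \ref{exemain} and forces the passage to the reduced tangency scheme in the statement. (A secondary, but routine, point is the careful treatment of the degenerate loci left aside above: the case $C_H\subseteq\sing(X)$, the low-dimensional base cases of the induction on $N$, and the reduction to $X$ irreducible.)
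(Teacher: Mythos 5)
Your argument is a genuinely different route from the paper's (induction on $N$ via projection from embedded tangent spaces), but it contains a genuine gap, and it is exactly the one you flag yourself: the general-position claim $\langle C_H\rangle \cap T_{x_0}X = T_{x_0}C_H$ for general $x_0 \in C_H$ is not established, and the whole induction closes only if it holds. This claim is not a routine transversality statement: the excess intersection $\langle C_H\rangle \cap T_{x_0}X \supsetneq T_{x_0}C_H$ occurs precisely when the tangent spaces $T_xX$, $x\in C_H$, are forced into the span $\langle C_H\rangle$ (e.g.\ when some linear space of dimension $\rho$ is tangent to $X$ along all of $C_H$), which is a configuration that genuinely arises for tangency loci and is the very phenomenon the theorem is trying to control. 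So the step you would need to ``grant'' is essentially of the same depth as the theorem itself. There are secondary unresolved points as well: $\pi$ is a projection from a \emph{tangent} centre, not a general one, so $T_{\pi(x)}X'=\pi(T_xX)$ and $\overline{\pi(C_H)}\subseteq C_{H'}$ require justification (the tangency scheme of $H'$ with $X'$ is defined via limits from $X'_{smooth}$, and $\pi(x)$ need not be a smooth point of $X'$); and the case where $C_H$ meets $\sing(X)$ cannot simply be pushed onto the singular strata, because $(H\cap X)^{red}_{tan}$ is the fibre of the conormal variety of $X$ (closure of the conormal bundle over $X_{smooth}$), not a tangency locus of the stratum.

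For comparison, the paper's proof is short and avoids all projections. Pass to affine cones and let $I_{\hX}$ be the conormal variety. By reflexivity, $I_{\hX}$ is also the conormal variety of $\widehat{X^*}$, so for $H^{\perp}\in X^*$ the reduced fibre $\{q(p^{-1}(H^{\perp}))\}_{red}$ (your $C_H$, as a cone) is exactly the set of limits of tangent hyperplanes to $\widehat{X^*}_{smooth}$ at points tending to $H^{\perp}$. Now apply Whitney's theorem that condition $(a)$ holds generically along any subvariety to the pair $(\widehat{X^*}, Z_r)$, where $Z_r$ is the cone over $\tilde{X}^*_r$: at a generic $H^{\perp}\in Z_r$ every such limit hyperplane contains $T_{Z_r,H^{\perp}}$, i.e.\ $\langle C_H\rangle \subseteq T_{Z_r,H^{\perp}}^{\perp}$, whence $r+1 \leq N+1-\dim Z_r$ and the bound follows. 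If you want to salvage your approach, you would in effect have to reprove this Whitney-type statement; it is cleaner to invoke it directly.
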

Here $\langle (H  \cap X)^{red}_{tan} \rangle$ is the linear span of the reduced tangency locus of $H$ with $X$. A complicated proof of this Theorem was announced in \cite{abuaf1}. In section 2 of this paper, we will give a slick proof of Theorem \ref{app1} based on the fact that Whitney condition $(a)$ is stratifying.

\bigskip

It was however noticed by Peskine and Zak that the Conjecture \ref{conj-moi} is intriguing precisely because it would imply the existence of unexpected bounds on the dimensions of families of osculating hyperplanes. With this perspective in mind, Theorem \ref{app1} seems a bit less appealing as it says nothing about osculating hyperplanes. In fact, in order to prove a result which would have a similar flavor as Conjecture \ref{conj-moi}, one has to replace the linear span of the tangency locus by the linear span of the union of the tangent spaces to the tangency locus. The difference between these two objects might look quite thin at first sight. Indeed, both objects coincide if the tangency locus is reduced : this is one of the reasons which explain why the erroneous conjecture \ref{conj-moi} was made in the first place. The example of the $d$-th Veronese embedding of $\mathbb{P}^n$ in $\mathbb{P}(S^d \mathbb{C}^{n+1})$ shows that these two objects may differ in general.

\begin{exem} \label{exem-bis}
Let $X = v_d(\mathbb{P}^n) \subset \mathbb{P}(S^d \mathbb{C}^{n+1}) = \mathbb{P}^{N}$. Let $H^{\perp} \in X^*$ such that $H \cap X$ is a cone over a smooth hypersurface in $\mathbb{P}^{n-1}$, with vertex denoted by $x$. The hyperplane $H$ is tangent to $X$ only at $x$. Furthermore, as explained in example \ref{exemain}, we have $\dim \langle (H \cap X)_{tan} \rangle \geq \binom{n+d}{d} -1 - n(n+1)$. On the other hand, we have $T_{(H \cap X)_{tan},x} \subset T_{X,x}$, so that $\dim \langle T_{(H \cap X)_{tan}} \rangle \leq n$. The quantities $\binom{n+d}{d} -1 - n(n+1)$ and $n$ differ as soon as $d \geq 4$ and $n \geq 3$.
\end{exem}

The linear span of the union of the tangent spaces to the tangency locus seems to be the correct linear span to consider as soon as stratification of the projective dual is concerned. Indeed, the main result of this paper is the:

\begin{theo} \label{main}
Let $X \subset \mathbb{P}^N$ be a smooth projective variety. Denote by $\overline{X}^*_r$ the variety:

\begin{equation*}
\overline{X}^*_r := \{ H^{\perp} \in X^*, \, \textrm{such that} \, \dim \langle T_{(H \cap X)_{tan}} \rangle \geq r \}.
\end{equation*}
Then, we have $\dim \overline{X}^*_r \leq N-r-1$.
\end{theo}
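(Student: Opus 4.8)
The plan is to stratify the dual variety, reduce the estimate on each stratum to an infinitesimal inclusion of linear subspaces of $\mathbb{P}^N$, and then extract that inclusion from the paper's main technical result: since the projective dual $(X^*)^*$ of $X^*$ is $X$, which is smooth, $X^*$ satisfies generically the scheme-theoretic Whitney condition $(a)$ along any subvariety.

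Write $\mathcal{I}\subset\mathbb{P}^N\times(\mathbb{P}^N)^*$ for the conormal variety of $X$, with projections $p:\mathcal{I}\to X$ and $q:\mathcal{I}\to X^*$. Because $X$ is smooth, $\mathcal{I}$ is a smooth projective bundle over $X$ of dimension $N-1$, hence reduced, so it coincides with the conormal variety of $X^*$ with the two factors interchanged; moreover the scheme-theoretic fibre $q^{-1}(H)$ is exactly the tangency scheme $(H\cap X)_{tan}$, and for $x\in q^{-1}(H)$ one has $T_x(H\cap X)_{tan}=\ker\!\big(dq_{(x,H)}\big)$ (the intersection $\ker dp_{(x,H)}\cap\ker dq_{(x,H)}$ being zero). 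The function $H\mapsto\dim\langle T_{(H\cap X)_{tan}}\rangle$ is constructible on $X^*$ (the fibres of $q$ vary algebraically, their Zariski tangent spaces are cut out by a coherent sheaf on $\mathcal{I}$, and the dimension of the span of a family of linear spaces is constructible), so there is a finite partition $X^*=\bigsqcup_\alpha S_\alpha$ into smooth irreducible locally closed subvarieties on which $\dim\langle T_{(H\cap X)_{tan}}\rangle\equiv\rho_\alpha$; after refining, it may be taken to be a Whitney stratification. Since $\dim\overline{X}^*_r=\max\{\dim S_\alpha:\rho_\alpha\ge r\}$, it suffices to show $\dim S_\alpha\le N-\rho_\alpha-1$ for every $\alpha$.

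Fix a stratum $S=S_\alpha$ and a general point $H\in S$, so that $\dim T_H S=\dim S$ and $\dim\langle T_{(H\cap X)_{tan}}\rangle=\rho_\alpha$. The crux is the inclusion of linear subspaces of $\mathbb{P}^N$
\[\langle\, T_{(H\cap X)_{tan}}\,\rangle\ \subseteq\ (T_H S)^{\perp},\]
where $(T_H S)^{\perp}\subset\mathbb{P}^N=(\mathbb{P}^N)^{**}$, the conormal space of $S$ at $H$, has dimension $N-1-\dim S$. Granting it, $\rho_\alpha\le N-1-\dim S$, i.e. $\dim S_\alpha\le N-\rho_\alpha-1$, and the theorem follows. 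To prove the inclusion I would invoke the scheme-theoretic Whitney condition $(a)$ for $X^*$ along $S$ at $H$ — legitimate, since the projective dual of $X^*$ is the smooth variety $X$. Identifying $(H\cap X)_{tan}$ with the fibre of the conormal map of $X^*$ over $H$, this condition forces the (possibly non-reduced) scheme $(H\cap X)_{tan}$ to lie inside $(T_H S)^{\perp}$ \emph{to first order}: at each of its points $x$ the Zariski tangent space $T_x(H\cap X)_{tan}$ is contained in $(T_H S)^{\perp}$. Spanning over all $x$ yields the displayed inclusion. (When $H$ is a smooth point of $X^*$ this degenerates to the classical identity ``contact locus $=(T_H X^*)^{\perp}$''; what the scheme-theoretic condition adds is that, at singular points of $X^*$, it still pins down the embedded tangent spaces of the fat scheme $(H\cap X)_{tan}$, although — as Example~\ref{exemain} shows — it cannot pin down the span of that fat scheme itself.)

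The main obstacle is precisely this scheme-theoretic Whitney input — the content of the paper's title: showing that the scheme-theoretic Whitney condition $(a)$ is stratifying (holds on a dense open of any subvariety) once the projective dual is smooth, and that it has exactly the consequence above for the Zariski tangent spaces of the non-reduced fibres of the conormal map. This is where smoothness of $X=(X^*)^*$ genuinely enters: it controls the second-order behaviour of $q$ — essentially the second fundamental form of $X$ along the conormal direction, which already governs $T_x(H\cap X)_{tan}=\ker dq_{(x,H)}$ — uniformly enough along $S$ for the limiting tangent hyperplanes of $X^*$ at $H$ to contain $T_H S$ compatibly with the embedded tangent spaces of $(H\cap X)_{tan}$, not merely set-theoretically (set-theoretic Whitney $(a)$ being all that is needed for Theorem~\ref{app1} and its reduced tangency locus). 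The residual bookkeeping — reducing to $X$ irreducible, arranging the partition to be simultaneously a Whitney stratification and adapted to $\rho_\alpha$, and checking the genericity statements on dense opens of the strata — is routine.
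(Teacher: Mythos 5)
Your reduction is exactly the paper's: Theorem \ref{main} is obtained by applying the stratifying property of the scheme-theoretic Whitney condition $(a)$ to the pair $(X^*,\overline{X}^*_r)$ (the paper does this directly at a generic point of $\overline{X}^*_r$ rather than via a refined stratification, but that is cosmetic), the needed hypothesis being that the contact loci $p(q^{-1}(H))=(H\cap X)_{tan}$ lie in the smooth locus of $(X^*)^*=X$, automatic since $X$ is smooth. The crux inclusion you isolate, $\langle T_{(H\cap X)_{tan}}\rangle\subseteq (T_{H}\overline{X}^*_r)^{\perp}$ at a generic $H$, is precisely the paper's definition of the condition, and it does yield $r\le N-1-\dim\overline{X}^*_r$ as you say.

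The gap is that you never prove this crux, and the heuristic you offer in its place (uniform control of the ``second fundamental form of $X$ along the conormal direction'' along the stratum) is not how the argument goes and is not obviously capable of being made precise. The paper's proof of the key input (Theorem \ref{mainmain}) is a short symplectic argument on affine cones: by reflexivity the conormal variety $I_{X}=I_{X^*}\subset\mathbb{C}^{N+1}\times(\mathbb{C}^{N+1})^*$ is Lagrangian for the canonical symplectic form $\omega$, so $\Omega_{I_X}$ restricted to the punctured fibre over $y$ is a Lagrangian subbundle; smoothness of $X=(X^*)^*$ enters only to guarantee that the projection of $I_X$ onto $X$ is a smooth (projective bundle) map over the entire contact cone, giving an injection of the pullback of $\Omega_{X}$ into $\Omega_{I_X}$ there. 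Combining this with generic smoothness of the other projection restricted over $Y$ (an injection $q^*\Omega_{Y,y}\hookrightarrow\Omega_{q^{-1}(y)}$ for generic $y$ --- an essential use of characteristic $0$ that your sketch omits) and the natural surjections of K\"ahler differentials onto those of the fibre and of the contact cone, one concludes that $\omega$ vanishes identically on $T_{Y,y}\times T_{p(q^{-1}(y)),z}$ for every $z$ in the punctured contact cone; this is exactly the asserted orthogonality of $\langle T_{(H\cap X)_{tan}}\rangle$ and $T_{H}\overline{X}^*_r$. Without some such argument, your proposal is a correct restatement of the theorem as a consequence of the paper's main technical result, not a proof of it.
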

Here $\langle T_{(H \cap X)_{tan}} \rangle$ is the linear span of the union of the tangent spaces to the tangency scheme of $H$ with $X$. The proof of this result is based on the stratifying property of a certain scheme-theoretic version of Whitney condition $(a)$ for the pair $(X^*,Y)$ ($Y$ a subvariety of $X^*$), provided that $X$ is smooth. The smoothness assumption can be dropped if one defines $\langle T_{(H \cap X)_{tan}} \rangle  $ as the linear span of the Zariski closure of the union of the tangent spaces to the tangency scheme of $H$ with $X$ at smooth points of $X$. We will come back to this more technical statement in section $2$ of the paper.

\bigskip

In section $3$, we will give applications of this result to both the study of tangency of projective varieties defined over the field of complex numbers and to convex real algebraic geometry. In particular, we will prove a Bertini-type result for osculating planes of smooth complex space curves (see Theorem \ref{osc}) and a generalization of a result due to Ranestad and Sturmfels describing the algebraic boundary of an affine compact real variety (see Theorem \ref{RSmieux}).

\bigskip

\textbf{Acknowledgments} \, \, I am very grateful to Christian Peskine and Fyodor Zak for the numerous and fruitful discussions we have had on Conjecture \ref{conj-moi}. It was their insight that the the linear span of the union of the tangent spaces to the tangency scheme should replace the linear span of the tangency scheme in the statement of Conjecture \ref{conj-moi}. I am also indebted to Claire Voisin for sharing with me her counter-example to Conjecture \ref{conj-moi}. 

\end{section}

\begin{section}{Variations on the Whitney condition $(a)$}

\begin{subsection}{Whitney condition $(a)$}
In this section we gather some elementary facts about Whitney condition $(a)$ and we use them to prove the set-theoretic version of Conjecture \ref{conj-moi}. Recall the definition of Whitney condition $(a)$ (see \cite{Whitney} or \cite{Teissier} for instance)

\begin{defi} \label{whitneya}
Let $X \subset \mathbb{C}^n$ be an (algebraic) variety, $Y \subset X$ a subvariety of $X$ and $y \in Y$. We say that \textbf{the pair $(X,Y)$ satisfies the Whitney condition $(a)$ at $y$} if for any sequence $\{x_n\}$ of points of $X_{smooth}$ converging to $y$ such that the sequence of tangent spaces $ T_{X,x_n}$ converges (in the appropriate Grassmannian), the limit tangent space $\lim T_{X,x_n}$ contains $T_{Y,y}$.
\end{defi}

With notations as in definition \ref{whitneya}, the Whitney condition $(a)$ is easily seen to be equivalent to the following : for any sequence $\{x_n\}$ of points of $X_{smooth}$ converging to $y$ and any converging sequence of hyperplanes $\{H_n \}$ which are tangent to $X$ at $x_n$, the limiting hyperplane $\lim H_n$ contains $T_{Y,y}$. This definition may then be further reformulated as follows:

\begin{prop}
Let $X \subset \mathbb{C}^{N+1}$ be an algebraic variety and $Y \subset X$ a subvariety with a marked point $y \in Y$. Denote by $I_X \subset \mathbb{C}^{N+1} \times (\mathbb{C}^{N+1})^*$ the Zariski closure of the total space of $N^*_{X_{smooth}/\mathbb{C}^{N+1}}$ over $X_{smooth}$. We denote by

\begin{equation*}
\xymatrix{ & & \ar[lldd]_{p}  I_X \ar[rrdd]^{q}& &  \\
& & & & \\
(\mathbb{C}^{N+1})^* & & & & \mathbb{C}^{N+1}}
\end{equation*}
the canonical projections. The pair $(X,Y)$ satisfies the Whitney condition $(a)$ at $y$ if and only if $\{q(p^{-1}(y)) \}_{red} \subset T_{Y,y}^{\perp}$.
\end{prop}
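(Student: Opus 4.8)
The statement to prove is a reformulation of Whitney's condition $(a)$ for the pair $(X,Y)$ at $y$ in terms of the conormal variety $I_X$. The plan is to unwind both sides and match them. First I would recall that by definition $I_X \subset \mathbb{C}^{N+1}\times(\mathbb{C}^{N+1})^*$ is the Zariski closure of the incidence set $\{(x,\xi)\,:\,x\in X_{smooth},\ \xi\in N^*_{X_{smooth}/\mathbb{C}^{N+1},x}\}$, where I identify a conormal covector at $x$ with a linear form vanishing on $T_{X,x}$ (equivalently, with the hyperplane $\ker\xi$ through the origin, once we work projectively). Then $q\colon I_X\to \mathbb{C}^{N+1}$ and $p\colon I_X\to(\mathbb{C}^{N+1})^*$ are the two projections, and a point of $p^{-1}(y)$ is a pair $(x,\xi)$ with $x=$ (the point $q(x,\xi)$) and $\xi$ a fixed conormal direction at $y$ obtained as a limit; more precisely, $p^{-1}(y)$ consists of limits $(y,\lim\xi_n)$ where $x_n\to y$ in $X_{smooth}$ and $\xi_n\in N^*_{X_{smooth},x_n}$.

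Next I would identify $q(p^{-1}(y))\subset\mathbb{C}^{N+1}$: it is the union of all limiting hyperplanes $\lim H_n$ (viewed as linear subspaces through $0$), where $H_n = \ker\xi_n$ is tangent to $X$ at $x_n\to y$. Indeed, projectivising, $q$ sends $(y,\xi)$ to the point(s) of $\mathbb{P}^N$ cut out — wait, more carefully: the fibre $p^{-1}(y)$ sits over the single point $\xi\in(\mathbb{C}^{N+1})^*$ only after we further project; the correct reading is that $p$ records the point $y$ and $q$ records the covector, or vice versa. I would fix the convention so that $q(p^{-1}(y))$ is precisely the set of covectors $\xi=\lim\xi_n$ arising as limits of conormals along sequences $x_n\to y$, and then the condition $\{q(p^{-1}(y))\}_{red}\subset T^\perp_{Y,y}$ reads: every such limiting conormal $\xi$ annihilates $T_{Y,y}$, i.e. $T_{Y,y}\subset\ker\xi=\lim H_n$. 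This is literally the hyperplane-version of condition $(a)$ stated in the paragraph preceding the Proposition, so the two conditions coincide.

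The only genuine content beyond bookkeeping is the equivalence, already recalled in the text, between the tangent-space formulation ($\lim T_{X,x_n}\supset T_{Y,y}$) and the hyperplane formulation ($\lim H_n\supset T_{Y,y}$ for all tangent hyperplanes): one direction is clear since each tangent hyperplane at $x_n$ contains $T_{X,x_n}$, hence in the limit contains $\lim T_{X,x_n}\supset T_{Y,y}$; conversely, $T_{X,x_n}$ is the intersection of all hyperplanes tangent to $X$ at $x_n$, and one checks that the limit of the intersection equals the intersection of the limits by a Grassmannian compactness/semicontinuity argument, so if every limiting tangent hyperplane contains $T_{Y,y}$ then so does $\lim T_{X,x_n}$. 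I expect the main (mild) obstacle to be purely notational: keeping straight which projection is $p$ and which is $q$, whether we work in affine cones or projectively, and the identification of conormal covectors with hyperplanes through the origin; once these identifications are pinned down the equivalence is immediate. I would also remark that passing to $\{\,\cdot\,\}_{red}$ changes nothing here because containment in the linear subspace $T^\perp_{Y,y}$ is a set-theoretic (indeed reduced) condition on $q(p^{-1}(y))$.
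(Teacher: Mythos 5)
Your proposal is correct and follows essentially the same route as the paper: the reduced fibre of the conormal variety over $y$ is precisely the set of limits of tangent hyperplanes at sequences of smooth points converging to $y$, and the hyperplane reformulation of condition $(a)$ stated just before the Proposition turns this into the claimed equivalence. You also correctly diagnose and resolve the paper's own swap of $p$ and $q$ (with the diagram's labelling the fibre should be $p(q^{-1}(y))$), and your added justification of the tangent-space versus hyperplane formulation is the standard Grassmannian compactness argument that the paper simply takes for granted.
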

Here $\{q(p^{-1}(y)) \}_{red}$ is the reduced scheme underlying $q(p^{-1}(y))$.

\begin{proof}
By definition of $I_X$, the reduced scheme $\{q(p^{-1}(y)) \}_{red}$ is the set of limits of converging sequences of hyperplanes tangent to $X_{smooth}$ at sequences of points which converge to $y$. Hence, the pair $(X,Y)$ satisfies the Whitney condition $(a)$ at $y$ if and only if $ \{q(p^{-1}(y)) \}_{red} \subset T_{Y,y}^{\perp}$.
\end{proof}
\bigskip

The Whitney condition $(a)$ is an interesting condition because it is stratifying. More precisely, we have:

\begin{theo}[\cite{Whitney}] \label{whiwhi1} Let $X \subset \mathbb{C}^{N+1}$ be an algebraic variety and let $Y \subset X$ be a subvariety. There exists a non-empty Zariski open subset $U \subset Y$ such that the pair $(X,Y)$ satisfies the Whitney condition $(a)$ for all $y \in U$.
\end{theo}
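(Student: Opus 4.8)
The statement asks that for a variety $X \subset \mathbb{C}^{N+1}$ and a subvariety $Y \subset X$, the pair $(X,Y)$ satisfies Whitney condition $(a)$ on a dense Zariski-open subset of $Y$. Let me think about what's available: we have the incidence variety $I_X \subset \mathbb{C}^{N+1} \times (\mathbb{C}^{N+1})^*$, the Zariski closure of the conormal space $N^*_{X_{smooth}/\mathbb{C}^{N+1}}$, with projections $p$ to $(\mathbb{C}^{N+1})^*$ and $q$ to $\mathbb{C}^{N+1}$. Wait — I need to be careful about which projection is which; in the excerpt, $p$ goes to $(\mathbb{C}^{N+1})^*$ and $q$ goes to $\mathbb{C}^{N+1}$, so actually the fiber description should be over points of $\mathbb{C}^{N+1}$. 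Let me restate: $q : I_X \to \mathbb{C}^{N+1}$, and for $y \in Y$ the set $\{q^{-1}(y)\}$... no. Looking again at the Proposition: "$\{q(p^{-1}(y))\}_{red} \subset T_{Y,y}^\perp$". So $p^{-1}(y)$ with $y \in \mathbb{C}^{N+1}$ — that means $p$ is the projection to $\mathbb{C}^{N+1}$. There's an inconsistency in the excerpt's diagram labeling, but the content is: take the fiber of the "base-point" projection over $y$, push it forward under the "conormal-direction" projection, and this (reduced) should lie in $T^\perp_{Y,y}$. I'll just use "the conormal variety $I_X$ with its two projections" and not fuss over letter names.

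Here is the approach. First, stratify $Y$ itself so that we may assume $Y$ is smooth and irreducible: replace $Y$ by a dense open smooth subset, and work one irreducible component at a time; it suffices to find for each component a dense open on which condition $(a)$ holds. Second, consider the restriction of the conormal variety over $Y$: let $I_X^Y := I_X \times_{\mathbb{C}^{N+1}} Y$ (fiber product along the base-point projection), with its projection $\pi : I_X^Y \to Y$. This is a closed subscheme of $Y \times (\mathbb{C}^{N+1})^*$; geometrically, for $y$ outside a proper closed subset of $Y$ the fiber $\pi^{-1}(y)$ consists exactly of the limiting tangent hyperplanes at $y$ (this needs $y$ to lie in the closure of $X_{smooth}$, but components of $Y$ inside $\mathrm{Sing}\,X \setminus \overline{X_{smooth}}$ are handled trivially since then the conormal fiber is empty). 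Third, put the "tangent-direction" structure on this: inside $Y \times (\mathbb{C}^{N+1})^*$, form the incidence condition "the hyperplane $H$ (as a direction at $y$) contains $T_{Y,y}$" — since $Y$ is now smooth, $T_{Y,y}$ varies algebraically, so this is a closed condition, cut out by the vanishing of the natural pairing between conormal directions and the tangent bundle $TY$. Call this locus $Z \subset Y \times (\mathbb{C}^{N+1})^*$. Condition $(a)$ at $y$ is precisely the statement that $\pi^{-1}(y)_{red} \subset Z_y$, i.e. that the fiber $\pi^{-1}(y)$ lies set-theoretically in $Z$.

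Fourth — the heart of the argument — I want to show $I_X^Y \subset Z$ holds over a dense open of $Y$. The clean way: by generic flatness (Noetherian, $Y$ integral) there is a dense open $U_1 \subseteq Y$ over which $\pi$ is flat, hence over which the fiber dimension of $\pi$ is constant and the irreducible components of $I_X^Y$ dominate $Y$ "fiberwise correctly". The key geometric input is that the \emph{generic} fiber already satisfies condition $(a)$: for generic $y$, limits of tangent hyperplanes at nearby smooth points of $X$ contain $T_{Y,y}$. This is the genuinely Whitney-theoretic content and is where I expect the main obstacle. One derives it by a curve-selection / valuative argument: if condition $(a)$ failed on a dense subset of $Y$, one could produce, over the generic point of $Y$, an arc $t \mapsto x(t) \in X_{smooth}$ with $x(0)$ the generic point of $Y$ and a limiting tangent hyperplane $H_0$ with $T_{Y,y} \not\subset H_0$; differentiating the arc shows $\dot x(0) \in T_{Y,y}$ (since the arc can be chosen tangent to $Y$ to first order after a generic perturbation — this is the subtle point and uses that $Y$ is in the closure of $X_{smooth}$), while $H_n \supset T_{X,x(t)} \ni \dot x(t)$ forces $H_0 \supset \dot x(0)$; iterating over enough arcs sweeping out $T_{Y,y}$ gives the containment. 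Alternatively, and more in the spirit of the Proposition above: use that $q(p^{-1}(-))$ is, over a dense open of $Y$, a \emph{constructible} family whose generic member is constrained by the Thom–Mather / Whitney finiteness theorem guaranteeing \emph{some} point of $Y$ where $(a)$ holds, then spread out.

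Fifth, once $(a)$ holds at one generic point, spread out: the condition "$\pi^{-1}(y) \subset Z$" is constructible in $y$ (both $I_X^Y$ and $Z$ are algebraic families over $Y$, and $\pi$ has a well-defined constructible image behaviour), so the locus where it holds, containing the generic point, contains a dense Zariski-open $U \subseteq Y$. On $U$ the pair $(X,Y)$ satisfies Whitney $(a)$, which is the assertion. The step I expect to be hardest and most delicate is the arc-theoretic verification that the limit hyperplane contains all of $T_{Y,y}$ and not just a single tangent direction — equivalently, producing enough arcs in $X_{smooth}$ whose velocity vectors at $y$ span $T_{Y,y}$; the cleanest fix is to invoke the curve selection lemma applied to the locally closed set of "bad pairs" in the relative Grassmannian bundle over $Y$ together with a dimension count showing that, were $(a)$ to fail on a dense subset of $Y$, the conormal variety $I_X$ would have a component of dimension exceeding $N$, contradicting that $\dim I_X = N$ (its standard dimension, since $I_X$ is the projective-cone version of the conormal variety of a projective variety). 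That dimension contradiction is, I think, the quickest rigorous route and is likely the one the author will take.
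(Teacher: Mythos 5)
First, note that the paper does not prove Theorem \ref{whiwhi1} at all: it cites Whitney for the original proof and Teissier for ``a quick proof in the algebraic setting.'' So there is no in-paper argument to match; what can be compared is your sketch against the standard quick proof, whose mechanism is in fact visible elsewhere in the paper, in the proof of Theorem \ref{mainmain}.

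Your steps 1--3 and 5 (reduce to $Y$ smooth and irreducible, restrict the conormal variety $I_X$ over $Y$, express condition $(a)$ as the fiber lying in the incidence locus $Z$, and spread out from the generic point by constructibility) are fine and standard. The gap is exactly where you locate it, in step 4, and neither of the two routes you offer closes it. The arc route founders on the point you yourself flag: one arc in $X_{smooth}$ limiting to $y$ only constrains the limit hyperplane to contain a single velocity vector, and the assertion that arcs can be ``chosen tangent to $Y$ to first order after a generic perturbation'' so as to sweep out all of $T_{Y,y}$ is unsubstantiated --- it is essentially the theorem restated. The dimension-count route is not correct as stated: the locus of ``bad'' pairs $(y,H)$ sits inside $q^{-1}(Y)$, which is already a proper closed subvariety of $I_X$ of dimension at most $\dim I_X - 1$, so the failure of condition $(a)$ on a dense subset of $Y$ produces no component of excessive dimension and no contradiction (also, with the paper's affine-cone conventions $\dim I_X = N+1$, not $N$). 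The missing ingredient is the Lagrangian property of the conormal variety: the tautological symplectic (or, projectively, contact) form vanishes on $I_X$ at its smooth points, hence its restriction to \emph{any} subvariety of $I_X$ vanishes identically on the smooth locus of that subvariety. Apply this to an irreducible component $D$ of $q^{-1}(Y)$ dominating $Y$: at a generic point $(y,H)$ of $D$ the map $D \to Y$ is smooth, so $dq\bigl(T_{D,(y,H)}\bigr)$ surjects onto $T_{Y,y}$, and the vanishing of the form forces $H \supseteq T_{Y,y}$. Components not dominating $Y$ are discarded by shrinking $U$. This is Teissier's argument, and it is precisely the differential-form computation the author carries out (in sheaf-theoretic form, via equations \ref{eq1}--\ref{eq5}) to prove the scheme-theoretic refinement in Theorem \ref{mainmain}; without invoking isotropy of $I_X$ in some form, your plan does not go through.
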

This result was originally proved by Whitney \cite{Whitney}. We refer to \cite{Teissier} for a quick proof in the algebraic setting. As a consequence of this stratifying property, one can immediately deduce the following:

\begin{theo} \label{app11}
Let $X \subset \mathbb{P}^N$ be a projective variety. Denote by $\tilde{X}^*_r$ the variety:

\begin{equation*}
\tilde{X}^*_r := \{ H^{\perp} \in X^*, \, \textrm{such that} \, \dim \langle (H \cap X)^{red}_{tan} \rangle \geq r \}.
\end{equation*}
Then, we have $\dim \tilde{X}^*_r \leq N-r-1$.
\end{theo}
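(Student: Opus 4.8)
The plan is to use Theorem \ref{whiwhi1} (Whitney condition $(a)$ is stratifying) together with a conical/projective-cone reformulation of the tangency locus. First I would pass to affine cones: let $\hat{X} \subset \mathbb{C}^{N+1}$ be the affine cone over $X \subset \mathbb{P}^N$, and observe that for a hyperplane $H \subset \mathbb{P}^N$ with $H^\perp \in X^*$, the reduced tangency locus $(H\cap X)^{\mathrm{red}}_{tan}$ pulls back (up to the cone) to $\{q(p^{-1}(H^\perp))\}_{red}$ in the notation of the proposition above, where now $p,q$ are the two projections from $I_{\hat X} \subset \mathbb{C}^{N+1}\times(\mathbb{C}^{N+1})^*$; indeed a point $x\in X_{smooth}$ lies in $(H\cap X)_{tan}$ exactly when $H$ is tangent to $X$ at $x$, i.e. $H^\perp \in q(p^{-1}(\hat x))$ — so the conormal variety $I_{\hat X}$ is precisely the incidence correspondence between points of $X$ and hyperplanes tangent there. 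The key translation is then: $\langle (H\cap X)^{\mathrm{red}}_{tan}\rangle \subseteq L$ for a linear subspace $L\subset \mathbb{P}^N$ if and only if, writing $\Lambda \subset \mathbb{C}^{N+1}$ for the linear span of $q(p^{-1}(H^\perp))$, we have $\Lambda^\perp \supseteq$ (the line corresponding to) $H^\perp$ being irrelevant — more precisely $\dim\langle(H\cap X)^{\mathrm{red}}_{tan}\rangle$ equals $\dim_{\mathbb{C}} \Lambda - 1$ where $\Lambda$ is the linear span of the fiber $q(p^{-1}(H^\perp))$ inside $\mathbb{C}^{N+1}$.

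Next I would run the stratification argument on $I_{\hat X}$, or rather on its image and fibers. Fix $r$ and let $Y \subset X^* = q'(I_{\hat X})$ (projection to the hyperplane factor) be an irreducible component of $\tilde X^*_r$ of dimension $d$; I want to show $d \le N-r-1$. Apply Theorem \ref{whiwhi1} to the pair $(X^*, Y)$ — or better, to the pair $(\overline{I_{\hat X}}, \text{something over } Y)$ — to get a nonempty Zariski-open $U\subset Y$ over which Whitney $(a)$ holds. For a very general $H^\perp \in U$, the proposition above gives that $\{q(p^{-1}(H^\perp))\}_{red}$ — which is (the cone over) the reduced tangency locus — is contained in $T^\perp_{Y, H^\perp}$. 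Hence the linear span of the reduced tangency locus, viewed in $\mathbb{C}^{N+1}$, lies in $T^\perp_{Y,H^\perp}$, so its dimension is at most $\operatorname{codim} T_{Y,H^\perp} = N+1 - \dim T_{Y,H^\perp} = N+1-d$ (using that $Y$, being an irreducible component of $\tilde X^*_r$, is generically smooth of dimension $d$, and that $H^\perp$ generic is a smooth point — one should also check $Y$ is a cone, or work in the affine cone throughout, so that tangent spaces contain the Euler vector field and projectivization is clean). Therefore the projective linear span $\langle (H\cap X)^{\mathrm{red}}_{tan}\rangle$ has dimension at most $N+1-d-1 = N-d$. But $H^\perp \in \tilde X^*_r$ forces this dimension to be $\ge r$, so $r \le N - d$, i.e. $d \le N - r$.

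This gives $d \le N-r$, which is one short of the claimed $d \le N-r-1$, so the genuinely delicate point — the main obstacle — is squeezing out the extra $1$. The fix is to notice that the inclusion $\{q(p^{-1}(H^\perp))\}_{red} \subset T^\perp_{Y,H^\perp}$ is not sharp: the fiber $q(p^{-1}(H^\perp))$ always contains more than just the conormal directions forced by $Y$ — in particular, since $H$ itself is tangent to $X$ along $(H\cap X)_{tan}$, one has $(H\cap X)_{tan}\subset H$, hence $\langle (H\cap X)^{\mathrm{red}}_{tan}\rangle \subseteq H$, i.e. the span in $\mathbb{C}^{N+1}$ lies in the hyperplane $H$, which is itself a point of $T^\perp_{Y,H^\perp}$-nature. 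More precisely, working in the affine cone over the universal hyperplane, the relevant incidence variety maps to $X^*$ with the property that the span lies in $H \cap (\text{Whitney limit hyperplanes})$; the point $H^\perp$ contributes a direction to $T_{Y,H^\perp}^\perp$ that is \emph{not} in the span of the tangency locus (the tangency locus spans a subspace of $H$, while $H^\perp$ corresponds to the annihilator of $H$, which is transverse to $H$). Hence the span of the reduced tangency locus lies in a subspace of $T^\perp_{Y,H^\perp}$ of dimension one less, giving $\dim\langle(H\cap X)^{\mathrm{red}}_{tan}\rangle \le N - d - 1$, and combined with $\ge r$ this yields $d \le N-r-1$ as desired. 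I expect the careful bookkeeping of cones versus projective spaces, and the verification that this extra direction is genuinely independent for generic $H^\perp\in Y$, to be the part requiring the most care.
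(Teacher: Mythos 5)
Your overall strategy is the paper's: pass to affine cones, apply Theorem \ref{whiwhi1} to the pair $(X^*,Y)$ for an irreducible component $Y$ of $\tilde{X}^*_r$, and read the dimension bound off the inclusion of the span of the tangency locus in $T^{\perp}_{Y,H^{\perp}}$. But two points need repair. First, to identify the fiber $q(p^{-1}(H^{\perp}))$ of the conormal variety of $\widehat{X}$ with the set of limiting tangent hyperplanes to $\widehat{X^*}$ at $H^{\perp}$ --- which is what the conormal reformulation of Whitney $(a)$ for the pair $(X^*,Y)$ actually requires --- you need the reflexivity theorem $I_{\widehat{X}} = I_{\widehat{X^*}}$. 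You never invoke it, and it is the hinge of the whole argument (and the reason the statement is tied to characteristic zero).

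Second, and more seriously, your ``missing $1$'' is an artifact of a dimension miscount, and the patch you propose to recover it is false as stated. Once you work with affine cones (which you must, both for Whitney $(a)$ and for linear spans to make sense), the cone $\widehat{Y}$ over a $d$-dimensional component $Y$ has dimension $d+1$, so at a generic point $H^{\perp}$ the tangent space $T_{\widehat{Y},H^{\perp}}$ has dimension $d+1$: it contains the Euler (radial) direction, i.e.\ the vector $H^{\perp}$ itself. Hence $T_{\widehat{Y},H^{\perp}}^{\perp}$ has dimension $N-d$, the affine span of the cone over the tangency locus has dimension at most $N-d$, its projectivization has dimension at most $N-d-1$, and $r \leq N-d-1$ drops out with no further work --- this is exactly the paper's computation with $Z_r$ the affine cone over $\tilde{X}^*_r$. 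Your patch instead asserts that $T^{\perp}_{Y,H^{\perp}}$ contains a direction transverse to $\widehat{H}$ which the span of the tangency locus must avoid; but precisely because $H^{\perp} \in T_{\widehat{Y},H^{\perp}}$, one has $T^{\perp}_{\widehat{Y},H^{\perp}} \subseteq (H^{\perp})^{\perp} = \widehat{H}$ automatically, so no such transverse direction exists and the claimed ``subspace of dimension one less'' is illusory. The extra $1$ lives in $T_{\widehat{Y},H^{\perp}}$, not in $T^{\perp}_{\widehat{Y},H^{\perp}}$; with that correction your argument closes and coincides with the paper's.
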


\begin{proof}
Let $\hX \subset \mathbb{C}^{N+1}$ be the affine cone over $X$. Let $I_{\hX} \subset \mathbb{C}^{N+1} \times (\mathbb{C}^{N+1})^*$ be the Zariski closure of the total space of $N^*_{\hX_{smooth}/\mathbb{C}^{N+1}}$ over $\hX_{smooth}$. We denote by $\widehat{X^*}$ the image of the projection of $I_X$ on $(\mathbb{C}^{N+1})^*$ (note that $ \widehat{X^*}$ is the affine cone over the projective dual of $X$). If $I_{\widehat{X^*}} \subset \mathbb{C}^{N+1} \times (\mathbb{C}^{N+1})^*$ is the Zariski closure of the total space of $N^*_{\widehat{X^*}_{smooth}/(\mathbb{C}^{N+1})^*}$ over $\widehat{X^*}_{smooth}$, the Lagrangian duality (also called \textit{Reflexivity}) asserts that:

\begin{equation*}
I_{\hX} = I_{\widehat{X^*}}.
\end{equation*}

Denote by :
\begin{equation*}
\xymatrix{ & & \ar[lldd]_{p}  I_X \ar[rrdd]^{q}& &  \\
& & & & \\
(\mathbb{C}^{N+1})^* & & & & \mathbb{C}^{N+1}}
\end{equation*}
 the canonical projections. Let $Z_r$ be the affine cone over $\tilde{X}^*_r$. By definition, 
\begin{equation*}
Z_r = \{ H^{\perp} \in \widehat{X^*}, \, \textrm{such that} \,   \dim \langle \{q(p^{-1}(H)) \}_{red} \rangle \geq r+1 \}.
\end{equation*}

Let $H^{\perp}$ be a generic point in $Z_r$. By Theorem \ref{whiwhi1}, we know that $\langle \{q(p^{-1}(H^{\perp})) \}_{red} \subset T_{Z_r,H^{\perp}}^{\perp}$. This implies that $\dim Z_r \leq N+1-(r+1)$. As a consequence, we conclude that $\dim \tilde{X}^*_r \leq N-1-r$.
\end{proof}
\bigskip

\end{subsection}

\begin{subsection}{Scheme-theoretic Whitney condition $(a)$}

In this section we will explore a scheme-theoretic variant of Whitney condition $(a)$ for varieties which are afinne cones over projective varieties. We prove that this condition is stratifying if the projective dual of the ambient variety is smooth. In the following, if $Z \subset \mathbb{C}^{N+1}$ is the affine cone over a closed projective scheme say $\mathbb{P}(Z) \subset \mathbb{P}^{N}$, we denote by $Z^{\bullet}$ the scheme $Z \backslash \{0\}$ and we let $\Big\langle \overline{T_{Z^{\bullet}}} \Big\rangle$ be the linear span of the Zariski closure of the the union of the tangent spaces to $Z^{\bullet}$. Notice that we have the chain of inclusions:
\[ \langle Z_{red} \rangle \subset \Big\langle \overline{T_{Z^{\bullet}}} \Big\rangle \subset \langle Z \rangle\]
and these inclusions are all equalities if $Z$ is reduced. Example \ref{exem-bis} shows that they can be both strict if $Z$ is non reduced. More genereally, if $Z \subset X \subset \mathbb{C}^{N+1}$ is a closed subscheme of $X$ in $\mathbb{C}^{N+1}$, we denote by $\Big\langle \overline{ T_{Z, X_{smooth}}} \Big\rangle$, the linear span in $\mathbb{C}^{N+1}$ of the Zariski closure of the union of the tangent spaces to $Z$ at smooth points of $X$.

\begin{defi}
Let $X \subset \mathbb{C}^{N+1}$ be the affine cone over a projective variety and let $Y \subset X$ be the affine cone over a projective subvariety of $\mathbb{P}(X)$. Let $y \in Y$ and let $I_X \subset \mathbb{C}^{N+1} \times (\mathbb{C}^{N+1})^*$ be the closure of $N^*_{X_{smooth}/\mathbb{C}^{N+1}}$ in $\mathbb{C}^{N+1} \times (\mathbb{C}^{N+1})^*$. We denote by 

\begin{equation*}
\xymatrix{ & & \ar[lldd]_{p}  I_X \ar[rrdd]^{q}& &  \\
& & & & \\
(\mathbb{C}^{N+1})^* & & & & \mathbb{C}^{N+1}}
\end{equation*}
 the canonical projections. We say that the pair $(X,Y)$ satisfies the \textbf{scheme-theoretic Whitney condition $(a)$ at $y$} if $\Big\langle \overline{T_{p(q^{-1}(y))^{\bullet}}} \Big\rangle \subset T_{Y,y}^{\perp}$.

\end{defi}

\begin{rem} \label{remrem}
\begin{enumerate}
\item It is obvious that the pair $(X,Y)$ satifies the Whitney condition (a) at $y$ if it satisfies the scheme-theoretic Whitney conditions (a) at $y$. The converse is false.

\item Since $X$ is the affine cone over a projective variety (denoted by $\mathbb{P}(X)$), the variety $X^* = p(I_X)$ is the affine cone over the projective dual of $\mathbb{P}(X)$, which we denote by $\mathbb{P}(X^*)$. For all $y \in Y$, with $y \neq 0$, the scheme $p(q^{-1}(y))$ is the affine cone over a projective scheme (which we denote by $\mathbb{P} \{p(q^{-1}(y)) \}$). One readily checks that $\Big\langle \overline{T_{p(q^{-1}(y)), X^*_{smooth}}} \Big\rangle$ is the affine cone over the linear subspace of $(\mathbb{P}^N)^*$ spanned by the union of the (embedded in $(\mathbb{P}^N)^*$) tangent spaces to $\mathbb{P} \{p(q^{-1}(y)) \}$ at smooth points of $\mathbb{P}(X^*)$. In particular, if $\mathbb{P} \{p(q^{-1}(y)) \} \subset \mathbb{P}( X^*)_{smooth} $ , then $\Big\langle \overline{T_{p(q^{-1}(y)), X^*_{smooth}}} \Big\rangle = \Big\langle \overline{T_{p(q^{-1}(y))^{\bullet}}} \Big\rangle$ is the affine cone in $(\mathbb{C}^{N+1})^*$ over the linear subspace of $(\mathbb{P}^N)^*$ spanned by the union of the (embedded) tangent spaces to $\mathbb{P} \{p(q^{-1}(y)) \}$.
\end{enumerate}
\end{rem}
\bigskip

Now we come to the main result of this section. 

\begin{theo} \label{mainmain}
Let $X \subset \mathbb{C}^{N+1}$ be the affine cone over a projective variety and let $Y \subset X$ be the affine cone over a projective subvariety of $\mathbb{P}(X)$. We denote by :
\begin{equation*}
\xymatrix{ & & \ar[lldd]_{p}  I_X \ar[rrdd]^{q}& &  \\
& & & & \\
(\mathbb{C}^{N+1})^* & & & & \mathbb{C}^{N+1}}
\end{equation*}
the closure of $N^*_{X_{smooth}/\mathbb{C}^{N+1}}$ in $\mathbb{C}^{N+1} \times (\mathbb{C}^{N+1})^*$ with its canonical projections. Assume that for generic $y \in Y$, the projectivization $\mathbb{P} \{ p(q^{-1}(y)) \}$ lies in $\mathbb{P} (X^*)_{smooth}$. Then, the pair $(X,Y)$ satisfies generically along $Y$ the scheme-theoretic Whitney condition $(a)$.
\end{theo}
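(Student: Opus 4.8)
The strategy is to reduce the scheme-theoretic statement to the set-theoretic Whitney condition $(a)$ for a cleverly chosen incidence variety, exactly as in the proof of Theorem \ref{app11}, exploiting Lagrangian duality. The key object is the conormal variety $I_X = I_{\widehat{X^*}}$, which is Lagrangian and symmetric in the two factors $\mathbb{C}^{N+1}$ and $(\mathbb{C}^{N+1})^*$. First I would set up the situation dually: since we want to control $\Big\langle \overline{T_{p(q^{-1}(y))^{\bullet}}} \Big\rangle$ and, by the hypothesis together with Remark \ref{remrem}(2), the scheme $p(q^{-1}(y))$ is (off the origin) a cone over a subscheme of $\mathbb{P}(X^*)_{smooth}$, the relevant span is the affine cone over the linear span of the union of the \emph{genuine} tangent spaces to $\mathbb{P}\{p(q^{-1}(y))\}$ inside $\mathbb{P}(X^*)$. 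So the problem becomes one entirely about the smooth variety $X^*$ and the fibers of $q$ over points of $Y$.

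Next I would introduce, over a suitable dense open $U \subset Y$, the relative family $q^{-1}(U) \subset I_X$ and push forward by $p$ to get a family of schemes $p(q^{-1}(y))$ parametrized by $y \in U$; shrinking $U$, one arranges that all these have a fixed dimension, that $\mathbb{P}\{p(q^{-1}(y))\} \subset \mathbb{P}(X^*)_{smooth}$, and that the linear span $\Big\langle \overline{T_{p(q^{-1}(y))^{\bullet}}}\Big\rangle$ has constant dimension, say $r+1$, with $r = \max_y \dim\langle T_{(\,\cdot\,)} \rangle$. Now form the incidence-type variety
\begin{equation*}
W := \overline{\{(y, \ell) : y \in U,\ \ell \in \mathbb{C}^{N+1},\ \ell \in \Big\langle \overline{T_{p(q^{-1}(y))^{\bullet}}}\Big\rangle \}} \subset \mathbb{C}^{N+1} \times \mathbb{C}^{N+1},
\end{equation*}
together with its projection $\pi_1$ to the first factor (onto $\overline{Y}$) and $\pi_2$ to the second factor. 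The fiber of $\pi_1$ over generic $y$ is the linear space $\Big\langle \overline{T_{p(q^{-1}(y))^{\bullet}}}\Big\rangle$, of dimension $r+1$. The point is then to identify $\pi_2(W)$ with (the affine cone over) $X$ itself — or at least with a variety whose conormal behavior we understand: because $I_X = I_{\widehat{X^*}}$ and $p(q^{-1}(y))$ is smooth in $X^*$, each tangent line to $p(q^{-1}(y))$ at a point $H^\perp$ is a limit of conormal directions, hence lies in the fiber $q(p^{-1}(H^\perp))$ which — by reflexivity and smoothness of $X^*$ at $H^\perp$ — is precisely the tangent space $T_{X, \cdot}$ at the corresponding point of $X$ (up to the cone structure). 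Tracking this through shows $\pi_2(W) \subseteq \widehat{X}$, and in fact one expects $\pi_2(W) = \widehat{X}$ by a dimension count using that $X^{**} = X$.

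With $\pi_2(W) = \widehat{X}$ established, the argument closes as in Theorem \ref{app11}: apply Theorem \ref{whiwhi1} (Whitney $(a)$ is stratifying) to the pair $(\widehat{X}, \overline{Y})$ — or more precisely to the relevant incidence structure — to conclude that for generic $y \in Y$ the limit of tangent spaces along $\widehat{X}$, which contains $\Big\langle \overline{T_{p(q^{-1}(y))^{\bullet}}}\Big\rangle$ by construction of $W$, is annihilated by $T_{Y,y}^\perp$; equivalently $\Big\langle \overline{T_{p(q^{-1}(y))^{\bullet}}}\Big\rangle \subset T_{Y,y}^\perp$, which is exactly the scheme-theoretic Whitney condition $(a)$ at $y$.

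\textbf{Main obstacle.} The crux — and the step I expect to be genuinely delicate — is the geometric identification showing that tangent lines to $p(q^{-1}(y))$ at smooth points of $X^*$ really do assemble, via reflexivity $I_X = I_{\widehat{X^*}}$, into (limits of) tangent spaces of $\widehat{X}$, i.e.\ that $W$ sits inside the correct conormal/tangency picture so that the stratifying theorem can be invoked. This is where the smoothness hypothesis on $\mathbb{P}(X^*)$ is essential: it guarantees that $T_{p(q^{-1}(y)), X^*_{smooth}}$ is an honest linear object and that Lagrangian duality can be applied fiberwise without having to worry about the singular locus of $X^*$ contributing spurious directions. Making the family $W$ and its two projections precise, and verifying flatness/openness so that "generic $y$" is legitimate throughout, is the technical heart; once that bookkeeping is in place, the appeal to Theorem \ref{whiwhi1} is formal.
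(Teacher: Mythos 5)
There is a genuine gap, and it sits exactly at the point your plan calls ``formal'': the final appeal to Theorem \ref{whiwhi1}. The set-theoretic stratification theorem only controls the \emph{reduced} fiber $p(q^{-1}(y))_{red}$; it tells you that every limiting tangent hyperplane, as a point of $(\mathbb{C}^{N+1})^*$, lies in $T_{Y,y}^{\perp}$, and since $T_{Y,y}^{\perp}$ is linear this also bounds the tangent spaces of the reduced scheme. But the whole content of the scheme-theoretic condition (cf.\ Example \ref{exem-bis}) is that the Zariski tangent spaces $T_{p(q^{-1}(y)),z}$ of the possibly non-reduced scheme $p(q^{-1}(y))$ can be strictly larger than those of its reduction, and no amount of set-theoretic Whitney information about $\widehat{X}$ or about an auxiliary incidence variety can see that infinitesimal structure. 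Your claim that ``the limit of tangent spaces along $\widehat{X}$ contains $\big\langle \overline{T_{p(q^{-1}(y))^{\bullet}}}\big\rangle$ by construction of $W$'' is exactly the missing assertion, and it is not established; likewise $\pi_2(W)=\widehat{X}$ is asserted rather than proved, and $W$ is placed in $\mathbb{C}^{N+1}\times\mathbb{C}^{N+1}$ although the spans you are collecting live in $(\mathbb{C}^{N+1})^*$. Finally, the reflexivity computation you sketch (tangent vectors to $X^*$ at $H^{\perp}$ annihilate the corresponding point $x$ of $X$) only yields $\big\langle \overline{T_{p(q^{-1}(y))^{\bullet}}}\big\rangle \subset y^{\perp}$, which is strictly weaker than the required $\subset T_{Y,y}^{\perp}$.

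The paper's proof does not pass through any set-theoretic Whitney statement; it is a direct infinitesimal argument. Reflexivity makes $I_X$ Lagrangian over $X^*_{smooth}$, so the symplectic form $\omega$ vanishes identically on $\Omega_{I_X}$ restricted to $q^{-1}(y)^{\bullet}$. Generic smoothness of $q : q^{-1}(Y)\to Y$ gives an injection $q^*\Omega_{Y,y}\hookrightarrow \Omega_{q^{-1}(U),q^{-1}(y)}$ (this is how all of $T_{Y,y}$, not just the line through $y$, enters), while smoothness of $p$ over $p(q^{-1}(y))^{\bullet}$ gives $p^*\Omega_{X^*}\hookrightarrow \Omega_{I_X}$; combining these with the surjections onto the cotangent sheaf of the fiber, one finds that $\omega$ pairs $T_{Y,y}$ trivially against $T_{p(q^{-1}(y)),z}$ for every $z$ in the punctured fiber, which is the desired inclusion. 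If you want to repair your argument, you must replace the reduction to Theorem \ref{whiwhi1} by some such fiberwise use of the Lagrangian condition on honest (co)tangent sheaves of the non-reduced fiber.
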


\begin{proof}

We know that for generic $y \in Y$, the projectivization $\mathbb{P} \{ p(q^{-1}(y)) \}$ lies in $\mathbb{P}(X^*)_{smooth}$. Hence, there exists a dense open subset $U \subset$Y, such that for all $y \in U$, the projectivized fiber $\mathbb{P} \{ p(q^{-1}(y)) \}$ lies in $\mathbb{P}(X^*)_{smooth}$. Put differently, for all $ y \in U$ and for all $z \in p(q^{-1}(y))$ such that $z \neq 0$, we have $z \in X^*_{smooth}$.

\bigskip

By Lagrangian duality (also called reflexivity, see \cite{GKZ}, chapter 1), we know that $I_X = I_X^*$ is a Lagragngian subvariety of $\mathbb{C}^{N+1} \times \left(\mathbb{C}^{N+1} \right)^*$. Hence, for all $(x,z) \in I_X$ such that $z \in X^*_{smooth}$, the vector space $\Omega_{I_X, (x,z)}$ is a Lagrangian subspace of $\mathbb{C}^{N+1} \times (\mathbb{C}^{N+1})^*$ with respect to the natural symplectic form (denoted by $\omega$) on $\mathbb{C}^{N+1} \times (\mathbb{C}^{N+1})^*$. In particular, for $y \in U$, the sheaf $\Omega_{I_X, q^{-1}(y)^{\bullet}}$ is a Lagrangian subbundle of $\mathbb{C}^{N+1} \times (\mathbb{C}^{N+1})^* \otimes \OO_{q^{-1}(y)^{\bullet}}$ with respect to $\omega$. As a consequence, the two form:

\begin{equation} \label{eq1}
\omega : \Omega_{I_X, q^{-1}(y)^{\bullet}} \times \Omega_{I_X, q^{-1}(y)^{\bullet}} \longrightarrow \OO_{q^{-1}(y)^{\bullet}}
\end{equation}
is everywhere vanishing.

\bigskip

We consider the restriction of the projection map $q : q^{-1}(Y) \longrightarrow Y$. Since we are working over $\mathbb{C}$, up to shrinking $U$, one can assume that the codifferential map gives an injection $0 \longrightarrow q^*\Omega_{U} \longrightarrow \Omega_{q^{-1}(U)}$ (this is the generic smoothness Theorem). As a consequence, for all $y \in U$, we have an injection of sheaves:

\begin{equation} \label{eq2}  
0 \longrightarrow q^*\Omega_{Y,y} \longrightarrow \Omega_{q^{-1}(U), q^{-1}(y)}.
\end{equation}
Moreover, as $q^{-1}(U)$ is a subscheme of $I_{X}$, we have a surjection:

\begin{equation} \label{eq3}  
\Omega_{I_X, q^{-1}(y)} \longrightarrow \Omega_{q^{-1}(U), q^{-1}(y)} \longrightarrow 0.
\end{equation}

\bigskip

Notice that we also have an injection: 
\begin{equation} \label{eq4}
0 \longrightarrow p^* \Omega_{X^*, p(q^{-1}(y))^{\bullet}} \longrightarrow \Omega_{I_X, q^{-1}(y)^{\bullet}}.
\end{equation}
Indeed, the morphism $p : I_X \longrightarrow X^*$ is smooth over $p(q^{-1}(y))^{\bullet}$ as it identifies with the fibration $N^*_{X^*_{smooth}/(\mathbb{C}^{N+1})^*} \longrightarrow X^*_{smooth}$. Finally, as $p(q^{-1}(y))^{\bullet}$ is a subscheme of $X^*$, we have a surjection:

\begin{equation} \label{eq5}
\Omega_{X^*, p(q^{-1}(y))^{\bullet}} \longrightarrow \Omega_{p(q^{-1}(y))^{\bullet}} \longrightarrow 0.
\end{equation}

Combining equations \ref{eq1}, \ref{eq2}, \ref{eq3}, \ref{eq4} and \ref{eq5}, we find that the two form:

\begin{equation*}
\omega : q^* \Omega_{Y,y} \times p^* \Omega_{p(q^{-1}(y))^{\bullet}} \longrightarrow \OO_{p(q^{-1}(y))^{\bullet}}
\end{equation*}
is everywhere vanishing. Taking tensor products with $\mathbb{C}(z)$ for any $z \in p(q^{-1}(y))^{\bullet}$ and then the dual, we find that the two form:

\begin{equation*}
\omega : q^* T_{Y,y} \times p^* T_{p(q^{-1}(y))^{\bullet},z} \longrightarrow \mathbb{C}
\end{equation*}
is zero. This means that the vector space $T_{Y,y}$ is orthogonal to $T_{p(q^{-1}(y)),z}$ for all $z \in p(q^{-1}(y))^{\bullet}$. As a consequence, we get the inclusion

\begin{equation*}
\Big\langle \overline{T_{p(q^{-1}(y))^{\bullet}}} \Big\rangle \subset T_{Y,y}^{\perp}.
\end{equation*}
This concludes the proof.
\end{proof}
\bigskip

As a matter of fact, it is not difficult to see that one has the more general statement:

\begin{theo}[variant of Theorem \ref{mainmain}] \label{mainmain2}
Let $X \subset \mathbb{C}^{N+1}$ be the affine cone over a projective variety and let $Y \subset X$ be the affine cone over a projective subvariety of $\mathbb{P}(X)$. We denote by :
\begin{equation*}
\xymatrix{ & & \ar[lldd]_{p}  I_X \ar[rrdd]^{q}& &  \\
& & & & \\
(\mathbb{C}^{N+1})^* & & & & \mathbb{C}^{N+1}}
\end{equation*}
the closure of $N^*_{X_{smooth}/\mathbb{C}^{N+1}}$ in $\mathbb{C}^{N+1} \times (\mathbb{C}^{N+1})^*$ with its canonical projections. Then for generic $y \in Y$, we have:

\begin{equation*}
\Big\langle \overline{T_{p(q^{-1}(y)), X^*_{smooth}}} \Big\rangle \subset T_{Y,y}^{\perp}.
\end{equation*}
\end{theo}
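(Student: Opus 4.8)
The plan is to reduce Theorem \ref{mainmain2} to Theorem \ref{mainmain} by restricting attention to the locus where the fibers of $q$ meet the smooth part of $X^*$, and then running essentially the same symplectic argument without the cleanliness hypothesis. More precisely, I would first observe that for generic $y \in Y$, the fiber $q^{-1}(y)$ is generically reduced of the expected dimension, and the open subscheme $q^{-1}(y) \cap p^{-1}(X^*_{smooth})$ is dense in $q^{-1}(y)$ (because $I_X \to X^*$ is the cotangent-type fibration over $X^*_{smooth}$, which is dense in $X^*$, and $q$ is dominant onto $Y$ with irreducible total space, so a generic fiber cannot be swallowed entirely by the proper closed subset $p^{-1}(\mathrm{Sing}(X^*))$). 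The point of the statement is precisely that we only ever look at tangent spaces $T_{p(q^{-1}(y)), z}$ for $z \in X^*_{smooth}$, so the singular locus of $X^*$ never enters.

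Next I would run the symplectic argument of the proof of Theorem \ref{mainmain} verbatim, but over the open set $W := q^{-1}(U) \cap p^{-1}(X^*_{smooth})$ rather than over all of $q^{-1}(U)$, where $U \subset Y$ is a dense open on which generic smoothness of $q$ holds. On $W$ the morphism $p$ is smooth (it is the restriction of $N^*_{X^*_{smooth}/(\mathbb{C}^{N+1})^*} \to X^*_{smooth}$), so we retain injection (\ref{eq4}) with $p(q^{-1}(y))^{\bullet}$ replaced by $p(q^{-1}(y)\cap p^{-1}(X^*_{smooth}))^{\bullet}$; and $I_X$ is Lagrangian at every point of $W$ by reflexivity, so the vanishing (\ref{eq1}) of $\omega$ on $\Omega_{I_X}$ holds over $W$. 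Injections (\ref{eq2}), (\ref{eq3}) and surjection (\ref{eq5}) are purely formal and survive unchanged. Chaining them exactly as before, one concludes that $\omega$ kills $q^*T_{Y,y} \times p^* T_{p(q^{-1}(y)),z}$ for every $z$ in $q^{-1}(y)$ lying over $X^*_{smooth}$, i.e. $T_{Y,y} \perp T_{p(q^{-1}(y)),z}$ for all such $z$. Taking the linear span over all these $z$ and then the Zariski closure of that union — which is exactly the definition of $\big\langle \overline{T_{p(q^{-1}(y)), X^*_{smooth}}} \big\rangle$ — gives the desired inclusion into $T_{Y,y}^{\perp}$.

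The main obstacle, and the only place where a little care is genuinely needed, is the density claim: one must ensure that after shrinking $U$ to arrange generic smoothness of $q : q^{-1}(U) \to U$, the open subscheme $q^{-1}(y) \cap p^{-1}(X^*_{smooth})$ is still nonempty (hence dense, since $q^{-1}(y)$ will be of pure dimension on a suitable $U$) for $y \in U$. This follows by a standard constructibility/generic-flatness argument: $p^{-1}(\mathrm{Sing}(X^*))$ is a proper closed subset of the irreducible variety $I_X$, its image under $q$ is a constructible subset of $Y$, and either that image is not dense in $Y$ — in which case we simply further shrink $U$ to avoid it entirely — or it is dense, in which case $q^{-1}(y) \cap p^{-1}(\mathrm{Sing}(X^*))$ has dimension strictly less than $\dim q^{-1}(y)$ for generic $y$ by the fiber-dimension theorem applied to $q|_{p^{-1}(\mathrm{Sing}(X^*))}$ versus $q$ itself, so its complement in the fiber is dense. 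Everything else is a cosmetic adaptation of the proof of Theorem \ref{mainmain}, replacing "the whole projectivized fiber is smooth on $X^*$" by "work only over the dense open part of the fiber that is smooth on $X^*$", which is harmless because the conclusion has been correspondingly weakened to a span over that open part.
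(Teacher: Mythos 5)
Your adaptation is essentially the paper's intended argument: the paper only remarks that the proof of Theorem \ref{mainmain2} is ``exactly the same'' as that of Theorem \ref{mainmain}, and restricting the whole symplectic chain of maps to $W = q^{-1}(U)\cap p^{-1}(X^*_{smooth})$ --- where $p$ is still the smooth conormal fibration and $I_X$ is still Lagrangian by reflexivity --- is precisely the right way to carry that out. So the core of your proof is fine.

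One correction, however: the step you single out as the ``main obstacle,'' namely the density of $q^{-1}(y)\cap p^{-1}(X^*_{smooth})$ in $q^{-1}(y)$, is neither needed nor true in general. The conclusion of Theorem \ref{mainmain2} only involves tangent spaces to $p(q^{-1}(y))$ at points lying in $X^*_{smooth}$; if the fiber meets $X^*_{smooth}$ in a small or even empty set, the span on the left-hand side shrinks accordingly and the inclusion into $T_{Y,y}^{\perp}$ only becomes easier (it is vacuous when the set is empty). Moreover the dichotomy you propose to establish density actually fails: in the paper's example of the cone over a generic hyperplane section of $\mathbb{P}^1\times\mathbb{P}^2$, with $Y=L$ the axis of the ruling, one has $p(q^{-1}(y))\subset X^*_{sing}$ for \emph{every} $y\in L$, so $q$ maps $p^{-1}(\mathrm{Sing}(X^*))$ onto all of $Y$ while the fibers over generic $y$ are not of strictly smaller dimension --- the theorem is simply void there, as the paper notes. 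Delete that paragraph; the rest of your argument stands as written.
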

The proof is exactly the same as for Theorem \ref{mainmain} and the details are left to the reader.

\begin{cor}
Let $X \subset \mathbb{P}^N$ be a smooth projective variety. Denote by $\overline{X}^*_r$ the variety:
\begin{equation*}
\overline{X}^*_r := \{ H^{\perp} \in X^*, \, \textrm{such that} \, \dim \langle T_{(H \cap X)_{tan}} \rangle \geq r \}.
\end{equation*}
Then, we have $\dim \overline{X}^*_r \leq N-r-1$.
\end{cor}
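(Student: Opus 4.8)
The plan is to deduce this exactly as Theorem \ref{app11} was deduced from the stratifying property of ordinary Whitney $(a)$: pass to affine cones, invoke reflexivity to identify the relevant conormal variety, apply Theorem \ref{mainmain} (or its variant \ref{mainmain2}) to the dual cone, and then read off the dimension bound from the fact that $T^{\perp}_{Y,y}$ is forced to contain a large linear subspace.

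First I would set up the cones. Let $\hX\subset\mathbb{C}^{N+1}$ be the affine cone over $X$; since $X$ is smooth, $\hX$ is smooth away from the origin, so $\hX_{smooth}\supseteq\hX^{\bullet}$. Let $\widehat{X^*}\subset(\mathbb{C}^{N+1})^*$ be the affine cone over the projective dual $X^*$, and let $I_{\hX}\subset\mathbb{C}^{N+1}\times(\mathbb{C}^{N+1})^*$ be the closure of the total space of $N^*_{\hX_{smooth}/\mathbb{C}^{N+1}}$, with projections $p\colon I_{\hX}\to(\mathbb{C}^{N+1})^*$ and $q\colon I_{\hX}\to\mathbb{C}^{N+1}$. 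Then $\widehat{X^*}=p(I_{\hX})$, and for $H^{\perp}\in X^*$ the fibre $q(p^{-1}(H^{\perp}))$ is the affine cone over the tangency scheme $(H\cap X)_{tan}$; because $X$ is smooth, $q(p^{-1}(H^{\perp}))^{\bullet}\subset\hX^{\bullet}=\hX_{smooth}\cap\hX^{\bullet}$, so by the cone version of Remark \ref{remrem}(2) the linear span $\Big\langle\overline{T_{q(p^{-1}(H^{\perp}))^{\bullet}}}\Big\rangle$ is exactly the affine cone over $\langle T_{(H\cap X)_{tan}}\rangle$ — this is the one place the smoothness hypothesis is genuinely used. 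By reflexivity (\cite{GKZ}, Chapter 1) one has $I_{\hX}=I_{\widehat{X^*}}$ under the canonical identification $((\mathbb{C}^{N+1})^*)^*=\mathbb{C}^{N+1}$, with the roles of $p$ and $q$ interchanged; note also that the hypothesis of Theorem \ref{mainmain} applied to the ambient cone $\widehat{X^*}$ — namely that $\mathbb{P}\{q(p^{-1}(y))\}=(H\cap X)_{tan}$ lie in $\mathbb{P}((\widehat{X^*})^*)_{smooth}=\mathbb{P}(\hX)_{smooth}=X$ — is automatic precisely because $X$ is smooth (alternatively one appeals to Theorem \ref{mainmain2}, which requires no hypothesis at all).

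Now write $\overline{X}^*_r=\bigcup_i W_i$ as the union of its irreducible components (the locus $\{\dim\langle T_{(H\cap X)_{tan}}\rangle\geq r\}$ is constructible, and a generic point of each component of its closure still belongs to it), fix $i$, and let $Z_i\subset\widehat{X^*}$ be the affine cone over $W_i$. Applying Theorem \ref{mainmain} (resp. \ref{mainmain2}) with ambient cone $\widehat{X^*}$ and $Y=Z_i$ produces a dense open $U_i\subset Z_i$ such that for every $y\in U_i$,
\[ \Big\langle\overline{T_{q(p^{-1}(y))^{\bullet}}}\Big\rangle\ \subset\ T_{Z_i,y}^{\perp}. \]
Shrink $U_i$ so that in addition every $y\in U_i$ is a smooth point of $Z_i$ and its image $H^{\perp}\in W_i$ lies in $\{\dim\langle T_{(H\cap X)_{tan}}\rangle\geq r\}$. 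Then the left-hand side is the affine cone over $\langle T_{(H\cap X)_{tan}}\rangle$ and has dimension $\geq r+1$, so $T_{Z_i,y}^{\perp}$ contains a linear subspace of dimension $\geq r+1$, whence $\dim Z_i=\dim T_{Z_i,y}\leq (N+1)-(r+1)=N-r$ and $\dim W_i=\dim Z_i-1\leq N-r-1$. Since this holds for each component, $\dim\overline{X}^*_r\leq N-r-1$.

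The content is entirely concentrated in Theorem \ref{mainmain}/\ref{mainmain2}; here the only thing to be careful about is the transport across the reflexivity isomorphism — checking that the conormal variety of $\widehat{X^*}$ is $I_{\hX}$ with $p$ and $q$ swapped, that $q(p^{-1}(y))$ is the affine cone over the tangency scheme, and that over it the linear span of Zariski tangent spaces at smooth points of $\hX$ is the full span (so no tangency information is lost) — together with the routine use of generic smoothness to pass from tangent spaces to the dimension of $Z_i$.
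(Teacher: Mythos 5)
Your proposal is correct and follows essentially the same route as the paper: the paper's proof is the one-line instruction to apply Theorem \ref{mainmain} to the pair $(X^*, \overline{X}^*_r)$, using that $(X^*)^* = X$ is smooth to verify its hypothesis, and your argument is exactly this with the cone set-up, reflexivity, and the passage from $T_{Y,y}^{\perp}$ containing an $(r+1)$-dimensional subspace to the dimension bound written out explicitly (following the template of the proof of Theorem \ref{app11}). The component-by-component treatment of $\overline{X}^*_r$ is a sensible technical refinement but does not change the substance.
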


\begin{proof}
Apply Theorem \ref{mainmain} to the pair $(X^*, \overline{X}^*_r)$, knowing that $(X^*)^* = X$ is smooth.
\end{proof}

\bigskip

The proof of Theorem \ref{mainmain} shows that a variant of the scheme-theoretic Whitney condition $(a)$ (namely that $\langle \overline{T_{p(q^{-1}(y)), X^*_{smooth}}} \rangle \subset T_{Y,y}^{\perp}$) is always stratifying. On the other hand, the following example suggests that this variant is far from being optimal. 

\begin{exem}
Let $X \subset \mathbb{C}^5$ be the affine cone over a generic hyperplane section of $\mathbb{P}^1 \times \mathbb{P}^2 \subset \mathbb{P}^5$. The surface $\mathbb{P} (X) \subset \mathbb{P}^4$ is a ruled surface, for which the principal axis of the ruling is denoted by $\mathbb{P}(L)$. The projective dual of $\mathbb{P}(X)$ is a cubic hypersurface, whose singular locus is precisely $\mathbb{P}(L)^{\perp}$ (we refer to \cite{zak1} and \cite{abuaf0} for more details on this example). As $X$ is smooth outside $0$, the pair $(X,L)$ obviously satisfies the scheme-theoretic Whitney condition $(a)$ at all $z \in L$ with $z \neq 0$. On the other hand, with notations as in Theorem \ref{mainmain2}, we have $p(q^{-1}(z)) \subset X^*_{sing}$, for any $z \in L$. Hence $\langle \overline{T_{p(q^{-1}(z)), X^*_{smooth}}} \rangle = 0$ for any $z \in L$ and the conclusion of Theorem \ref{mainmain2} is void in that situation.
\end{exem}

This suggests that there is a some room for improving Theorem \ref{mainmain} and Theorem \ref{mainmain2}. In particular, one could ask the following question:

\begin{quest}
Let $X \subset \mathbb{C}^{N+1}$ be the affine cone over a projective variety and let $Y \subset X$ be the affine cone over a projective subvariety of $\mathbb{P}(X)$. Does the pair $(X,Y)$ necessarily satisfies the scheme-theoretic Whitney condition $(a)$ at the generic point of $Y$?
\end{quest}

\end{subsection}
\end{section}

\begin{section}{Applications of the scheme-theoretic Whitney conditions}
\begin{subsection}{Classification of projective surfaces with a maximal family of maximally tangent hyperplanes}
The goal of this section is to give an application of Theorem \ref{mainmain} to the classification of smooth projective varieties with large family of \textit{maximally tangent hyperplanes}. 

\begin{theo} \label{surface}
Let $X \subset \mathbb{P}^N$ be a smooth projective non-degenerate surface and $N \geq 5$. Denote by $X^*(1)$ the set of hyperplanes which are tangent to $X$ along a curve. Then 
\begin{equation*}
\dim X^*(1) \leq N-3.
\end{equation*}
If $\dim X^*(1) = N-3$, then the family of tangency curves in $X$ of hyperplanes parametrized by $X^*(1)$ is at least $1$-dimensional. If it is at least $2$-dimensional, then $X$ is the Veronese surface in $\mathbb{P}^5$.

\end{theo}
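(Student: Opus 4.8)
The whole argument runs on the Corollary to Theorem \ref{mainmain}, i.e. on Theorem \ref{main} ($\dim\overline X^*_r\le N-r-1$), together with two classical facts: Zak's theorem that the Gauss map of a smooth subvariety of $\mathbb P^N$ is finite, and the fact that a smooth nondegenerate surface in $\mathbb P^N$ with $N\ge 3$ carries at most a $1$-dimensional family of lines (otherwise it is a cone or a plane). The basic local observation is this: if $H$ is tangent to $X$ along a curve $C$, then at a general point $x\in C$ one may pick local coordinates $(u,v)$ on the smooth surface $X$ with $C=\{v=0\}$ and $H|_X=v^2g(u,v)$, so that $(H\cap X)_{tan}$ is reduced and equal to $C$ near $x$ when $g|_C\not\equiv 0$, and non-reduced along $C$ (with $2$-dimensional Zariski tangent space) otherwise. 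In the first case $\langle T_{(H\cap X)_{tan}}\rangle\supseteq\langle\bigcup_{x\in C}T_{C,x}\rangle=\langle C\rangle$, since the union of the tangent lines of a curve spans the same linear space as the curve; in the second case $\langle T_{(H\cap X)_{tan}}\rangle$ contains the span of $\{T_{X,x}\}_{x\in C}$, which has dimension $\ge 3$ because by Zak the $2$-planes $T_{X,x}$, $x\in C$, cannot all coincide. Either way $H\in\overline X^*_{\dim\langle C\rangle}$, and $H\in\overline X^*_3$ in the non-reduced case.

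To prove $\dim X^*(1)\le N-3$, take $H$ generic in an irreducible component of $X^*(1)$ and let $C$ be its tangency curve. If $\dim\langle C\rangle\ge 2$ — in particular whenever $C$ is not a line — then $H\in\overline X^*_2$ and the component has dimension $\le N-3$ by Theorem \ref{main}. If $C$ is a line $\ell\subset X$ and the tangency scheme is non-reduced along it, then $H\in\overline X^*_3$ and the component has dimension $\le N-4$. If $C=\ell$ is a line with reduced tangency scheme, then $H$ ranges over the hyperplanes containing $V_\ell:=\langle\bigcup_{x\in\ell}T_{X,x}\rangle$, which form a $\mathbb P^{\,N-1-\dim V_\ell}\subseteq\mathbb P^{\,N-4}$ (again $\dim V_\ell\ge 3$ by Zak), while $\ell$ varies in the at most $1$-dimensional Fano scheme of lines of $X$; hence the component has dimension $\le N-3$. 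Next, if $\dim X^*(1)=N-3$ but the family of tangency curves were finite, say $\{C_1,\dots,C_k\}$, then every $H\in X^*(1)$ is tangent along some $C_i$, hence contains $V_{C_i}$, so $X^*(1)\subseteq\bigcup_i\mathbb P(V_{C_i}^{\perp})$; since $\dim V_{C_i}\ge 3$ for each $i$ (Zak: $\gamma|_{C_i}$ is non-constant, so the $2$-planes $T_{X,x}$, $x\in C_i$, span at least a $\mathbb P^3$), each piece has dimension $\le N-4$, whence $\dim X^*(1)\le N-4$, a contradiction. So the family of tangency curves is at least $1$-dimensional.

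Now assume in addition that the family $T$ of tangency curves is at least $2$-dimensional; by the bookkeeping above, together with $\dim X^*(1)=N-3$, this occurs on an $(N-3)$-dimensional component of $X^*(1)$, along which a generic tangency curve $C$ has $r:=\dim\langle C\rangle$ with $r\le 2$ (the component lies in $\overline X^*_r$, or in $\overline X^*_{r'}$ with $r'\ge r$ in the non-reduced case, and $\dim\overline X^*_r\le N-1-r$) and $r\ge 2$ (since $r=1$ would make the tangency curves lines, contradicting $\dim T\ge 2$ against the $1$-dimensionality of the Fano scheme of lines). Hence $r=2$: the tangency curves are reduced plane curves spanning planes $\Pi_C=\langle C\rangle\cong\mathbb P^2$, and any $H$ tangent along $C$ must contain $\Pi_C$ — otherwise $H\cap\Pi_C$ would be a line tangent to the plane curve $C$ at every one of its points, which is impossible for a reduced plane curve of degree $\ge 2$.

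It remains to deduce $X=v_2(\mathbb P^2)\subset\mathbb P^5$, and this is the main obstacle — the dimension bookkeeping through Theorem \ref{mainmain} being routine by comparison. The plan here is: a foliation argument shows that through a general point of $X$ the curves of $T$ passing through it realize all tangent directions of $X$ at that point (otherwise these curves would be the integral curves of a $1$-dimensional foliation on $X$, whose algebraic leaves move in at most a $1$-dimensional family), so in particular any two general points of $X$ lie on a common tangency curve $C$; an intersection-theoretic analysis on $X$ then forces $\dim T=2$ and $C^2=1$, so that the complete system $|C|$ (note $X$ is rational, being covered by the rational curves $C$, hence $q=0$) maps $X$ birationally to $\mathbb P^2$, and in fact isomorphically, since a proper blow-up of $\mathbb P^2$ would be forced into a $\mathbb P^{<5}$, contradicting $N\ge 5$; thus $X\cong\mathbb P^2$ re-embedded by a linear subsystem of $|\mathcal O(2)|$ (as $C$ spans only a $\mathbb P^2$), and $N\ge 5$ forces the subsystem to be complete, i.e. $X=v_2(\mathbb P^2)\subset\mathbb P^5$. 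Equivalently, once one knows that any two general points of $X$ lie on a common $C$, the secant variety $\operatorname{Sec}(X)\subseteq\bigcup_C\Pi_C$ has dimension $\le 4$, and one concludes by Severi's theorem characterizing $v_2(\mathbb P^2)$ as the only smooth nondegenerate surface with defective secant variety.
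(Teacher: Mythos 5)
Your treatment of the first two assertions is sound and runs parallel to the paper's, with a slightly different (and arguably cleaner) case split: the paper first proves $\dim X^*(1)\le N-2$ and then excludes equality via Zak's theorem on tangencies (an $(N-3)$-dimensional family of hyperplanes tangent along a fixed line would produce a $\mathbb{P}^2$ tangent to $X$ along that line), whereas you stratify by $\dim\langle C\rangle$ and by reducedness and invoke finiteness of the Gauss map; these are interchangeable. Your argument that a finite family of tangency curves forces $\dim X^*(1)\le N-4$ is likewise the paper's argument in dual form. The step $r=\dim\langle C\rangle=2$ is also fine.

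The genuine gap is in the final classification. Everything after ``It remains to deduce $X=v_2(\mathbb{P}^2)$'' is a plan, not a proof, and its key claims are unjustified: you do not show that the tangency curves are rational (being plane curves they need not be), nor that $C^2=1$, nor that $\dim T=2$. The alternative route via Severi has the same hole: the inclusion $\operatorname{Sec}(X)\subseteq\overline{\bigcup_C\Pi_C}$ only yields $\dim\operatorname{Sec}(X)\le 4$ if the family of planes $\{\Pi_C\}$ is at most $2$-dimensional, and since each plane meets $X$ in finitely many curves this is equivalent to $\dim T\le 2$ --- exactly what you asserted without proof. A priori $\dim T=\dim\langle\bigcup_{x\in C}T_{X,x}\rangle-2$ can exceed $2$ when $N\ge 6$, so this must be ruled out or circumvented. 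The paper circumvents it: since any two general points of $X$ lie on a common \emph{plane} tangency curve, a tangency curve of degree $\ge 3$ would make every bisecant of $X$ a trisecant, contradicting the trisecant lemma; hence the reduced tangency curves are conics, generically smooth (else $X$ carries a $2$-dimensional family of lines), and a smooth nondegenerate surface covered by an at least $2$-dimensional family of smooth conics is the Veronese surface in $\mathbb{P}^5$. This degree bound via the trisecant lemma is the missing ingredient in your write-up; with it, the detour through secant defectivity, rationality and $C^2$ becomes unnecessary.
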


\begin{proof}
Let $H^{\perp} \in X^*(1)$ be generic. Then $\Big\langle T_{(H \cap X)_{tan}} \Big\rangle \geq 1$ and we have equality if and only if $(H \cap X)_{tan}$ is scheme-theoretically a line. By Theorem \ref{mainmain}, this implies that $\dim X^*(1) \leq N-2$. 

\bigskip

Assume that $\dim X^*(1) = N-2$. For any $H^{\perp} \in X^*(1)$ we denote by $L_H$ the tangency scheme of $H$ along $X$. This is a line. Fix a general $H_0^{\perp} \in X^*(1)$. Assume that the dimension of the set of $H^{\perp} \in X^*(1)$ which are tangent to $X$ along $L_{H_0}$ is at least $N-3$. Since $N \geq 5$, we would find a $\mathbb{P}^2 \subset \mathbb{P}^N$ which is tangent to $X$ along $L_{H_0}$. By Zak's Theorem on Tangency (\cite{zak0}) this is impossible. Hence, the dimension of the set of $H^{\perp} \in X^*(1)$ which are tangent along $L_{H_0}$ is at most $N-4$. This implies that the family of lines $\{L_H \}_{H^{\perp} \in X^*(1)}$ is at least $2$-dimensional. As a consequence, the variety $X$ is a $\mathbb{P}^2$ linearly embedded in $\mathbb{P}^N$. This is impossible by the non-degeneracy hypothesis. As a conclusion, we get that:

\begin{equation*}
\dim X^*(1) \leq N-3.
\end{equation*}

\bigskip

Assume that $\dim X^*(1) = N-3$. Let $I_X = \mathbb{P}(N^*_{X/\mathbb{P}^N}(1)) \subset \mathbb{P}^N \times (\mathbb{P}^N)^*$. We denote by
\begin{equation*}
\xymatrix{ & & \ar[lldd]_{p}  I_X \ar[rrdd]^{q}& &  \\
& & & & \\
(\mathbb{P}^{N})^* & & & & \mathbb{P}^{N}}
\end{equation*}
the canonical projections. Let us put $Z = q(p^{-1}(X^*(1)))$. Assume that $\dim Z = 1$. Then, there exists an irreducible component of $Z$ which is included in $(H \cap X)_{tan}$ for all $H^{\perp} \in X^*$. Since $\dim X^*(1) = N-3$ and $N \geq 5$, this would imply that there is a $\mathbb{P}^2$ which is tangent to $X$ along $Z$. By Zak's Theorem on Tangency, this is impossible. As a consequence, we have:

\begin{equation*}
p(q^{-1}(X^*(1))) = X,
\end{equation*}
so that the family of tangency curves in $X$ of hyperplanes parametrized by $X^*(1)$ is at least $1$-dimensional.

\bigskip

Assume that this family is at least $2$-dimensional. For any $H^{\perp} \in X^*$, we denote by $X_H$ the tangency locus of $H$ with $X$. Since the family $\{X_H\}_{H^{\perp} \in X^*(1)}$ is assumed to be at least $2$-dimensional, we find that for any $x,y \in X$, there exists a hyperplane $H^{\perp} \in X^*(1)$ such that $x$ and $y$ lie on the curve $X_H$. Since $\dim X^*(1) =  N-3$, Theorem \ref{app11} implies that for generic $H^{\perp} \in X^*(1)$, the linear span of $(X_H)_{red}$ is $2$-dimensional, which proves that $(X_H)_{red}$ is a plane curve. Assume that $\deg (X_H)_{red} \geq 3$. As for any $x,y \in X$, there exists $H^{\perp} \in X^*(1)$ such that $x$ and $y$ lie on the curve $(X_H)_{red}$, this shows that every bisecant to $X$ is at least a trisecant. This is impossible by the trisecant lemma. We deduce that for generic $H^{\perp} \in X^*(1)$, the curve $(X_H)_{red}$ is a plane conic. Note that for generic $H^{\perp} \in X^*(1)$, the conic $(X_H)_{red}$ can not be singular as $X$ does not contain a two dimensional family of lines (otherwise $X$ would be a $\mathbb{P}^2$ linearly embedded in $\mathbb{P}^5$). We infer that $X$ is covered by an at least two-dimensional family of conics, the generic member of this family of conics being smooth. We conclude that $X$ is the Veronese surface in $\mathbb{P}^5$.

\end{proof}

It would be interesting to know if one can get a classification result as in Theorem \ref{surface} when $\dim X^*(1) = N-3$ and the family of tangency loci along $X$ of hyperplanes parametrized by $X^*(1)$ is exactly $1$-dimensional. By Theorem \ref{mainmain}, this is equivalent to the following problem:

\begin{prob}
Classify all smooth surfaces $X \subset \mathbb{P}^N$ with $N \geq 5$ such that there is a $1$-dimensional family of $\mathbb{P}^3$ whose tangency loci with $X$ are curves.
\end{prob}
This classification must necessarily take into account ruled surfaces as many of them do satisfy the hypothesis of the problem.

\end{subsection}

\begin{subsection}{A Bertini-type Theorem for osculating planes of space curves}
In this section we will focus on a Bertini-type result for osculating hyperplanes that can be obtained as a consequence of Theorem \ref{mainmain}.

\begin{theo} \label{osc}
Let $C \subset \mathbb{P}^3$ be a smooth space curve. A generic osculating plane to $C$ is not tangent to $C$ outside its osculating locus. 
\end{theo}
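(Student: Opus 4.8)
The plan is to deduce Theorem \ref{osc} from the scheme-theoretic Whitney condition $(a)$ for the pair $(C^*, Y)$, where $C^* \subset (\mathbb{P}^3)^*$ is the dual variety of $C$ and $Y$ is a suitably chosen subvariety. Recall first the basic projective geometry of a smooth space curve $C \subset \mathbb{P}^3$: the dual variety $C^* \subset (\mathbb{P}^3)^*$ is the \emph{tangent developable} of the dual curve, i.e. the closure of the union of osculating planes; it is a surface, ruled by the lines $\{H^\perp : H \supset T_{C,c}\}$ of planes containing a fixed tangent line. A plane $H$ osculates $C$ at $c$ precisely when $H^\perp$ lies on the cuspidal edge of this developable surface, which is exactly the dual curve $C^\vee \subset (\mathbb{P}^3)^*$; for generic $c$ this is a smooth point of $C^*$ is \emph{not} true — the cuspidal edge is the singular locus. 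So the first technical point I would isolate is: for a generic osculating plane $H$, the tangency scheme $(H\cap C)_{tan}$ is supported on $c$ together with the finitely many other points where $H$ is tangent to $C$, and scheme-theoretically at $c$ it is a length-$3$ (curvilinear) subscheme of $C$ because the osculating plane meets $C$ with multiplicity $3$ at $c$.

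Next I would set up the dual picture carefully. Let $I_C \subset \mathbb{P}^3 \times (\mathbb{P}^3)^*$ be the conormal variety, with projections $p$ to $(\mathbb{P}^3)^*$ and $q$ to $\mathbb{P}^3$, so that $p(I_C) = \widehat{C^*}$ on cones, $p(I_C) = C^*$ projectively. Reflexivity gives $I_C = I_{C^*}$. For $H^\perp \in C^*$ the fiber $q(p^{-1}(H^\perp))$ is exactly the support of the tangency locus $(H\cap C)_{tan}$, and the scheme structure of $p(q^{-1}(c))$ for $c \in C$ is the pencil of planes through the tangent line $T_{C,c}$, which is a line $\ell_c$ in $(\mathbb{P}^3)^*$. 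Now the osculating plane $H$ at $c$ is the point of $\ell_c$ which lies on the dual curve $C^\vee$; equivalently, in $C^*$, it is where $\ell_c$ is tangent to the cuspidal edge. The strategy is then to apply Theorem \ref{mainmain2} (or Theorem \ref{mainmain} after checking smoothness) to the pair $(C^*, C^\vee)$, where I take $Y$ to be (the cone over) the dual curve $C^\vee$: the theorem tells me that for generic $c$, writing $H = $ osculating plane at $c$ and $h = H^\perp \in C^\vee \subset C^*$, we have
\begin{equation*}
\Big\langle \overline{T_{p(q^{-1}(h)), X^*_{smooth}}} \Big\rangle \subset T_{C^\vee, h}^{\perp},
\end{equation*}
reading $X = C^*$ so $X^* = C$. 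Here $p(q^{-1}(h))$ is, on the $C$-side, the scheme $(H\cap C)_{tan}$, and $T_{C^\vee, h}^\perp \subset \mathbb{P}^3$ is a line (the annihilator of a point-plus-tangent-direction in $(\mathbb{P}^3)^*$, which is a codimension-$2$ flag datum). The key extraction is: this line $T_{C^\vee,h}^\perp$ equals the tangent line $T_{C,c}$ itself — indeed the dual of the flag "$c \in H$, $T_{C,c}\subset H$" recovers $T_{C,c}$ — so the union of tangent spaces to $(H\cap C)_{tan}$ at its smooth-of-$C$ points (all points of $C$ are smooth) is contained in the line $T_{C,c}$.

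Now the punchline: if $H$ were tangent to $C$ at some further point $c' \neq c$ outside the osculating locus, then $c'$ is an isolated reduced-or-not point of $(H\cap C)_{tan}$ with $T_{(H\cap C)_{tan}, c'} \supseteq T_{C,c'}$, a line through $c'$. The inclusion just derived forces $T_{C,c'} \subset T_{C,c}$, hence $T_{C,c'} = T_{C,c}$, so $C$ has a bitangent line $=$ its own tangent line at two points, i.e. $c$ and $c'$ lie on a common tangent line of $C$ and that line is tangent at both. But for a generic point $c$ of a smooth space curve this cannot happen: the locus of $c$ admitting such a "tangent-bitangent" pairing is at most a proper closed subset of $C$ (a line tangent to $C$ at two points is a very special stationary bisecant; generic tangent lines meet $C$ only at the point of tangency, by the generic behaviour of the trisecant/tangent lemmas and the fact that a smooth curve has only finitely many such degenerate tangents unless it lies on a quadric in a way one rules out). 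Thus a generic osculating plane $H$ is tangent to $C$ only along its osculating locus, i.e. only at $c$ (with multiplicity), which is the assertion. I should also handle the degenerate possibility that $(H\cap C)_{tan}$ is a whole line contained in $C$ — impossible since $C$ is a curve not containing a line unless $C$ is a line, trivially excluded, or the statement is vacuous.

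The main obstacle I expect is the translation dictionary at the point $h = H^\perp$ on the cuspidal edge $C^\vee \subset C^*$: I must verify that (i) I may legitimately invoke Theorem \ref{mainmain2} here even though $h$ is a \emph{singular} point of $C^*$ — this is fine because that theorem has no smoothness hypothesis, but then $p(q^{-1}(h))$ is computed with tangent spaces taken at smooth points of $C$, and since $C$ is everywhere smooth there is no loss; and (ii) that $\langle \overline{T_{p(q^{-1}(h)),\,C_{smooth}}}\rangle$ genuinely captures the tangent directions at the secondary tangency points $c'$ and not merely at $c$ — i.e. that those secondary points are not swallowed into some limiting/boundary phenomenon. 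Concretely the delicate check is that for generic $c$, the secondary tangency points $c'$ of the osculating plane $H_c$ depend algebraically on $c$ and sweep out a well-defined correspondence whose generic fiber I can analyze; granting generic smoothness of that correspondence over $C$, the dimension count forcing $T_{C,c'} \subset T_{C,c}$ goes through, and the geometric exclusion of self-bitangent tangent lines for generic $c$ finishes the proof.
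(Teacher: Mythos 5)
Your overall strategy is the paper's: apply the scheme-theoretic Whitney condition to the pair $(\widehat{C^*}, \widehat{C_{osc}})$, where $C_{osc}=C^{\vee}$ is the curve of osculating planes, and identify $T_{C^{\vee},h}^{\perp}$ with the tangent line $T_{C,c}$ --- that computation is correct, and you are also right that the smoothness one needs in order to invoke Theorem \ref{mainmain} is smoothness of $(C^*)^*=C$, not of $C^*$ along its cuspidal edge. The gap is in the punchline. At a \emph{secondary} tangency point $c'$ the plane $H$ is, for generic $H$ in the family, only simply tangent, i.e. $H\cap C$ has multiplicity $2$ at $c'$; the tangency scheme there --- whether you define it by the Jacobian ideal (locally $V(f')$ with $f=t^{2}u$, $u(0)\neq 0$, hence $V(t)$), or as the image of the scheme-theoretic fibre of $I_C\to C^*$, which is unramified at $(c',h)$ because $h$ is not the osculating plane of $c'$ --- is a \emph{reduced} point. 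Its Zariski tangent space is therefore zero, not a line containing $T_{C,c'}$ as you assert (your ``reduced-or-not \dots with $T_{(H\cap C)_{tan},c'}\supseteq T_{C,c'}$'' fails precisely in the reduced case). Consequently the Whitney inclusion yields only $c'\in T_{C,c}$, not $T_{C,c'}=T_{C,c}$.

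This matters because the two contradictions you could aim for are of very different depth. What the argument actually delivers is that for generic (hence every) $c$ the tangent line $T_{C,c}$ meets $C$ again, possibly transversally --- i.e. $C$ is tangentially degenerate --- and excluding this for a smooth space curve is exactly Kaji's theorem, the non-trivial input the paper invokes at this point (and remarks is not available for general singular curves). Your substitute --- that a generic tangent line cannot be \emph{tangent} to $C$ at a second point, i.e. generic injectivity of the Gauss map --- is indeed an easier fact (it follows from reflexivity: a generic tangent plane of $C$ touches at a single point), but it says nothing about a tangent line meeting $C$ transversally elsewhere, which is the case your weakened conclusion leaves open. To repair the proof you must either show the secondary tangencies are generically non-reduced in the tangent direction (they are not), or fall back on Kaji's theorem as the paper does.
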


\begin{proof}
We proceed by absurd. Denote by $C_{osc} \subset (\mathbb{P}^3)^*$ the curve of osculating planes to $C$. We know (see \cite{Piene}) that for generic $H^{\perp} \in C_{osc}$, the plane $H$ osculates $X$ to first order at exactly one point, which we denote by $c_H$. Assume that for generic $H^{\perp} \in C_{osc}$, the plane $H$ is tangent to $C$ outside $c_H$. We denote by $C_H^{extra}$ the reduced tangency locus of $H$ with $C$ located outside $c_H$. Since $\dim C_{osc} = 1$, Theorem \ref{mainmain} implies that for generic $H^{\perp} \in C_{osc}$, we have:

\begin{equation*}
\langle C_H^{extra} \rangle \subset  T_{(H \cap C)_{tan},c_H} ,
\end{equation*}
where $T_{(H \cap C)_{tan},c_H}$ is the tangent space to the tangency locus of $H$ along $C$ at $c_H$. As $H$ is osculating $C$ at $c_H$ to the first order, we know that $T_{(H \cap C)_{tan},c_H} = T_{C,c_H}$. We deduce that the tangent $T_{C,c_H}$ cuts $C$ outside of $c_H$.  This is true for generic $H^{\perp} \in X_{osc}$, so that for all $c \in C$, the  tangent $T_{C,c}$ cuts again $C$ outside of $c$. Since $C$ is smooth, we get a contradiction with Kaji's result on tangentially degenerate curve \cite{Kaji}.
\end{proof}

\bigskip

It is well known (and often used when dealing with plane projections of space curves) that a \textit{generic} curve in $\mathbb{P}^3$ has only finitely many osculating planes which are tangent to the curve outside the osculating locus \cite{BF}, \cite{GH} (chapter 2, section 5), \cite{Wall}. To the best of my knowledge, it was not known that all smooth curves in $\mathbb{P}^3$ are \textit{generic}, when it comes to the dimension of the family of doubly tangent osculating hyperplanes.

\begin{rem}
\begin{enumerate}
\item The reflexivity Theorem for osculating varieties \cite{Piene} shows that \textbf{any} space curve in $\mathbb{P}^3$ has a $1$-dimensional family of bitangent osculating hyperplanes if and only if the generic tangent line to the osculating curve is at least a trisecant. On the other hand, the proof of the main result of \cite{Kaji} only works for curves which are mildly singular (see \cite{pirola} for some improvements of Kaji's result). It would be interesting to know if one can get a direct proof of Theorem \ref{osc} which would work for singular curves. This would imply in particular that any space curve is tangentially non-degenerate.

\item One expects that similar results to Theorem \ref{osc} hold for higher dimensional varieties. Namely, under appropriate hypotheses on the dimension and codimension of $X \subset\mathbb{P}^N$, it is not inconsiderate to believe that a generic osculating hyperplane is not tangent to $X$ outside of its osculating locus.
\end{enumerate}
\end{rem}

\end{subsection}

\begin{subsection}{The algebraic boundary of an affine compact real variety}

Let $X_{\mathbb{R}} \subset \mathbb{R}^N$ be an affine compact real variety. We denote by $P := \mathrm{Conv}(X_{\mathbb{R}})$ the closed convex hull of $X$ in $\mathbb{R}^N$ and by $\partial_a P$ the Zariski closure in $\mathbb{P}^{N}_{\mathbb{C}}$ of the boundary of $\mathrm{Conv}(X_{\mathbb{R}})$. The complex projective hypersurface $\partial_a P$ is called the \textit{algebraic boundary} of $X_{\mathbb{R}}$ and has become recently an object of active study for both algebraic geometry and optimization theory (see \cite{RS, RS2, BHORS, ORSV, Sinn} for instance).

In \cite{RS}, a result describing the algebraic boundary of an affine compact real variety in terms of duals of some singular strata of the projective dual of its projective closure was proved under a restricting hypothesis: namely that only finitely many hyperplanes are tangent to the complex projective closure of the given variety at infinitely many points. Using Theorem \ref{app11}, we are able to get rid of the restricting hypothesis.

\begin{theo}\label{RSmieux}
Let $X_{\mathbb{R}} \subset \mathbb{R}^N$ be a smooth and compact real algebraic variety that affinely spans $\mathbb{R}^N$. We denote by $X \subset \mathbb{P}^N_{\mathbb{C}}$ the Zariski closure of $X_{\mathbb{R}}$ in $\mathbb{P}^N_{\mathbb{C}}$. Then we have:

\begin{equation*}
\partial_a P \subseteq \bigcup_{k = r(X)}^{N} (\tilde{X}_k^*)^*,
\end{equation*}
where $r(X)$ is the minimal integer $k$ such that $(k+1)$-th secant variety of $X$ is of dimension at least $N-1$. In particular, every irreducible components of $\partial_a P$ is a component of $(\tilde{X}^*_k)^*$ for some $k$.

\end{theo}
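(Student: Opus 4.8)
The plan is to combine three ingredients: the classical description of the supporting hyperplanes of a convex body at its boundary, the reflexivity (biduality) theorem, and the dimension bound of Theorem \ref{app11}. First I would recall that, since $X_{\mathbb{R}}$ is compact and affinely spans $\mathbb{R}^N$, the boundary $\partial P$ of $P = \mathrm{Conv}(X_{\mathbb{R}})$ is a hypersurface in $\mathbb{R}^N$, and every point $p \in \partial P$ lies on at least one supporting hyperplane $H_p$ of $P$; moreover such a supporting hyperplane must be tangent to $X$ (in the complex sense) at the points of $\partial P \cap X$ that it contains, because $P = \mathrm{Conv}(X_{\mathbb{R}})$ forces $H_p$ to meet $X_{\mathbb{R}}$ and the real smooth points where it does so are local extrema of the linear functional cutting out $H_p$. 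Thus the Zariski closure $\partial_a P$ is covered, via the Gauss-type incidence, by the closure of the set of hyperplanes $H$ with $H^\perp \in X^*$ that are tangent to $X$ along a positive-dimensional locus — more precisely, an open dense subset of each component of $\partial_a P$ arises as $\overline{\{p(q^{-1}(H^\perp))\}}$ for $H^\perp$ ranging over a component of some $\tilde X^*_k$ with $k$ large enough that the tangency locus spans a hyperplane.

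Next I would make the indexing precise. For $H^\perp$ generic on a component $W$ of $\partial_a P^\vee$ (the dual hypersurface of $\partial_a P$, which sits inside $X^*$), the tangency locus $(H \cap X)_{tan}$ has some reduced linear span of dimension $k$; since $\partial_a P$ is a hypersurface its dual is generically a curve's worth... more carefully: $H$ is a supporting hyperplane, so $(H\cap X)_{tan}^{red}$ spans exactly the hyperplane $H$ itself when $H^\perp$ is a smooth point of $\partial_a P^\vee$ of the relevant component, giving $k = N-1$; but for special components one gets smaller spans, and the constraint that the family of such hyperplanes fills out a hypersurface in the dual space forces, by Theorem \ref{app11}, the span to be at least $r(X)$, where $r(X)$ is controlled by when the secant variety becomes a hypersurface (a tangency locus spanning a $\mathbb{P}^k$ produces, via secants, a contribution of dimension $\leq$ the relevant secant variety). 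This identifies the component of $\partial_a P^\vee$ as a component of $\tilde X^*_k$ for some $k$ with $r(X)\leq k\leq N$. Dualizing back via biduality, $\partial_a P = (\partial_a P^\vee)^*$ is contained in $\bigcup_{k=r(X)}^N (\tilde X^*_k)^*$, and each irreducible component of $\partial_a P$ is a component of some $(\tilde X^*_k)^*$.

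The main obstacle I expect is the bookkeeping in the second step: pinning down exactly why the lower index $r(X)$ is governed by the secant variety of $X$ rather than by a cruder bound. The point is that a component $W\subset\tilde X^*_k$ whose generic tangency locus spans only a $\mathbb{P}^k$ contributes to $\partial_a P$ a subvariety which, by biduality, is the dual of $W$, and this dual is nonempty and of the right dimension only when $W$ itself is large enough; the translation into secant varieties comes from the fact that tangent hyperplanes to $X$ along a $\mathbb{P}^k$-spanning locus are tangent along (the intersection with $X$ of) a linear space, and such data is parametrized by something mapping to the $(k+1)$-secant variety of $X$, whose dimension must reach $N-1$ for the dual hypersurface condition to be satisfiable. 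I would isolate this as a short lemma bounding $\dim(\tilde X^*_k)^*$ below $N-1$ when the $(k+1)$-secant variety has dimension $< N-1$, combining Theorem \ref{app11} with the standard tangency–secant incidence, and then the containment of $\partial_a P$ follows since only the surviving indices $k\geq r(X)$ can produce a hypersurface component.
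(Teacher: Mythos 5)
Your overall route is the right one, and it is in fact the route the paper takes: the paper's proof of Theorem \ref{RSmieux} is a one-line reduction to the argument of \cite{RS} (proof of their Theorem 1.1), with Theorem \ref{app11} substituted for their Lemma 3.1 so that the finiteness hypothesis on hyperplanes tangent at infinitely many points can be dropped. You correctly identify all the ingredients (supporting hyperplanes are tangent to $X$ at the real points they meet, biduality, Theorem \ref{app11} as the dimension control, and the secant variety of $X$ as the source of the lower index $r(X)$).

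There is, however, a concrete confusion in your covering step that would break the write-up. You assert that an open dense subset of a component of $\partial_a P$ arises as $\overline{\{q(p^{-1}(H^{\perp}))\}}$, i.e.\ as a union of tangency loci, and later that $(H\cap X)^{red}_{tan}$ spans the whole hyperplane $H$ for $H^{\perp}$ generic in the relevant dual component. Both claims are wrong: the union of the tangency loci lies in $X$ itself, which is typically of large codimension and cannot sweep out the hypersurface $\partial_a P$; and for, say, a curve in $\mathbb{P}^3$ the generic supporting plane is a tritangent or bitangent plane whose tangency points span a plane or a line, not necessarily $H$. The correct statement is that a boundary point of $P=\mathrm{Conv}(X_{\mathbb{R}})$ is, by Carath\'eodory, a convex combination of finitely many points of $X_{\mathbb{R}}$ lying on a common supporting hyperplane $H$, all of which are tangency points of $H$ with $X$; hence the boundary point lies in the \emph{linear span} $\langle (H\cap X)^{red}_{tan}\rangle$. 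It is this span that Theorem \ref{app11} (more precisely, the Whitney condition $(a)$ argument behind it) places inside $(T_{\tilde{X}^*_k,H^{\perp}})^{\perp}$ for $H^{\perp}$ generic in its stratum $\tilde{X}^*_k$, and therefore inside the dual variety $(\tilde{X}^*_k)^*$ by biduality. The lower bound $k\geq r(X)$ then follows because a boundary point involves at least $k+1$ tangency points spanning a $k$-plane, so the union of these spans sits inside the $(k+1)$-th secant variety of $X$, which must have dimension at least $N-1$ to contain a component of the hypersurface $\partial_a P$. With the covering step stated this way, your sketch closes up and coincides with the argument of \cite{RS} that the paper invokes.
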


\begin{proof}
The proof is exactly the same as the one that appears in \cite{RS} (proof of Theorem 1.1), except that our Theorem \ref{app11} improves and supersedes their Lemma 3.1.
\end{proof}

\bigskip

In \cite{RS2, BHORS, ORSV}, the study of the algebraic boundary of $X_{\mathbb{R}} \subset \mathbb{R}^N$ is done in connection with spectrahedral properties of $\mathrm{Conv}(X_{\mathbb{R}})$. More precisely, it is shown that if $\mathrm{Conv}(X_{\mathbb{R}})$ is a spectrahedron, then the algebraic boundary of $X$ has a rich geometry similar to that of classical determinantal hypersurfaces. Nevertheless, we are still lacking a precise description of the affine real compact varieties $X_{\mathbb{R}} \subset \mathbb{R}^{N}$ such that $\mathrm{Conv}(X_{\mathbb{R}})$ is spectrahedral. As part of a general work on spectrahedral representations of convex hulls of semi-algebraic set, Helton and Nie \cite{HN} conjectured that any semi-algebraic convex sets in $\mathbb{R}^N$ is a spectrahedral shadow. Scheiderer recently exhibited many counter-examples to this conjecture \cite{Scheiderer}. In particular, Scheiderer proves the following:

\smallskip

\begin{prop}[\cite{Scheiderer}, proof of corollary 4.25]
The closed convex hull of the Veronese embedding $v_d(\mathbb{R}^n) \subset \mathbb{R}^{N+1}$ is not a spectrahedral shadow for $n \geq 4$ and $d \geq 4$. 
\end{prop}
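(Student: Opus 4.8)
This statement is \cite{Scheiderer}, proof of Corollary~4.25, so I only sketch the argument. The plan is to reduce, by convex duality, the failure of $P := \mathrm{Conv}(v_d(\mathbb{R}^n))$ to be a spectrahedral shadow to the failure of the cone of nonnegative forms of degree $d$ in $n+1$ variables to be a spectrahedral shadow — the latter being one of the main theorems of \cite{Scheiderer}.

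Concretely, I would argue by contradiction and assume $P$ is a spectrahedral shadow. First I would pass to the polar: the class of spectrahedral shadows is stable under polar duality (one of the standard stability properties recorded in \cite{Scheiderer}), so $P^{\circ}$ is a spectrahedral shadow. Next I would identify $P^{\circ}$. Writing $V$ for the $(N+1)$-dimensional space of polynomials of degree $\leq d$ in $n$ variables and regarding $v_d(x)$ as the evaluation functional $p \mapsto p(x)$ on $V$, one has $P^{\circ} = \{p \in V : p(x) \leq 1 \text{ for all } x \in \mathbb{R}^n\} = \{p \in V : 1 - p \in C\}$, where $C \subset V$ is the cone of everywhere-nonnegative polynomials of degree $\leq d$ in $n$ variables. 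Thus $P^{\circ}$ is the preimage of $C$ under the invertible affine map $p \mapsto 1-p$, so $C$ too is a spectrahedral shadow. Finally, for $d$ even the homogenization $q(x) \mapsto x_0^{d}\, q(x/x_0)$ is a linear isomorphism of $C$ onto the cone $P_{n+1,d}$ of nonnegative forms of degree $d$ in $n+1$ variables (for $d$ odd one lands instead in the cone of degree-$d$ forms nonnegative on a half-space, which is treated by the same method in \cite{Scheiderer}). Since $n \geq 4$ and $d \geq 4$ force $n+1 \geq 5$ and $d \geq 4$, which is not one of the three Hilbert cases, Scheiderer's theorem gives that $P_{n+1,d}$ is not a spectrahedral shadow — a contradiction, completing the proof.

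Essentially all of the difficulty is concentrated in the input from \cite{Scheiderer}, namely that the cone of nonnegative forms fails to be a spectrahedral shadow outside the Hilbert cases; the reduction above is purely formal, using only that spectrahedral shadows are closed under affine images and preimages and under polar duality. The one place demanding a little attention is the possible duality gap hidden in the ``stable under polar duality'' step — but this is exactly the kind of routine issue dispatched by the cited stability results — together with the parity of $d$, which merely changes which cone of nonnegative forms one ends up with.
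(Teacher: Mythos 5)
The paper offers no proof of this Proposition---it is imported verbatim from Scheiderer's proof of Corollary 4.25---and your sketch is exactly the argument given there: polar duality reduces the convex hull of the Veronese to the cone of nonnegative polynomials of degree $\leq d$ in $n$ variables, which homogenizes to the cone of nonnegative forms in $n+1 \geq 5$ variables of degree $\geq 4$, a non-Hilbert case to which Scheiderer's main non-shadow theorem applies. Your handling of the two genuine subtleties (closure of spectrahedral shadows under polars and affine images, and the parity of $d$) is correct, so there is nothing to fix.
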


 It is difficult not to see that the numerical conditions that appear in the above statement are exactly the same that ensure that Conjecture \ref{conj-moi} fails for the Veronese embedding $v_d(\mathbb{P}^{n-1}) \subset \mathbb{P}^N$ (see Example \ref{exemain} in the introduction). One could then wonder if the properties of being a spectrahedral shadow and verifying Conjecture \ref{conj-moi} are related. Namely, one can ask:
 
\begin{quest}
 
 Let $X_{\mathbb{R}} \subset \mathbb{R}^N$ be an affine variety, we denote by $X$ the Zariski closure of $X_{\mathbb{R}}$ in $\mathbb{P}^N_{\mathbb{C}}$. Assume that $\mathrm{Conv}(X_{\mathbb{R}})$ is a spectrahedral shadow. Let $r$ be a positive integer, is it true that:
 
 \begin{equation*}
 \dim X^*_r \leq N-r-1,
 \end{equation*}
 where $X^*_r := \{ H^{\perp} \in X^*, \, \textrm{such that} \, \dim \langle (H \cap X)_{tan} \rangle \geq r \}$?
 \end{quest}

A positive answer to this question would provide a quite effective criterion to decide if the convex hull of an affine real variety is a spectrahedral shadow.

\end{subsection}

\end{section}
\newpage

\bibliographystyle{alpha}

\bibliography{bibliconvex}

\newpage

\end{document}